\documentclass[
  a4paper, 
  reqno, 
  oneside, 
  12pt
]{amsart}

%---------PACKAGES-------------------------------------------
\usepackage[utf8]{inputenc}

\usepackage[dvipsnames]{xcolor} % Must come before tikz!
\colorlet{cite}{red}
\usepackage{tikz}  % for diagrams
\usepackage{float}

\usetikzlibrary{arrows,positioning}
\tikzset{ 
  baseline=-2.3pt,
  text height=1.5ex, text depth=0.25ex,
  >=stealth,
  node distance=2cm,
  mid/.style={fill=white,inner sep=2.5pt},
}

\setlength{\textwidth}{6.7in}
\setlength{\oddsidemargin}{-0.3in}
\setlength{\evensidemargin}{-0.3in}
\setlength{\textheight}{9in}
\setlength{\topmargin}{0in}

\usepackage{lmodern}
\usepackage{tikz-cd}
\usepackage{amsthm, amssymb, amsfonts}

\usepackage{amsthm, amssymb, amsfonts}	% Standard maths packages.
\usepackage[all]{xy}
\usepackage{graphicx,caption,subcaption}
\usepackage{braket}  % For nice sets
\usepackage{microtype}
\usepackage{DotArrow}
\usepackage[makeroom]{cancel}
\usepackage[%
  bookmarks=true,			% show bookmarks bar?
  unicode=true,			% non-Latin characters in Acrobat’s bookmarks
  pdftitle={Lyapunov basic 1-forms},		%
  pdfauthor={Valencia},	% 
  pdfkeywords={Lyapunov 1-form, flows, orbifolds},	% list of keywords
  colorlinks=true,		% false: boxed links; true: colored links
  linkcolor=black,			% color of internal links
  citecolor=black,		% color of links to bibliography
  filecolor=magenta,		% color of file links
  urlcolor=RoyalBlue			% color of external links
]{hyperref}				% One of the most useful packages I have found.
\usepackage{cleveref}

%---------THEOREMS-------------------------------------------
\newtheoremstyle{mydef}
  {}		% Space above environment
  {}		% Space below environment
  {}		% Body font
  {}		% Indent amount (empty = no indent, \parindent = para indent)
  {\scshape}	% theorem head font
  {. }		% Punctuation after heading
  { }		% Space after heading
  {\thmname{#1}\thmnumber{ #2}\thmnote{ #3}}	% Heading spec

\newtheorem{theorem}{Theorem}[section]
\newtheorem*{theorem*}{Theorem}
\newtheorem{proposition}[theorem]{Proposition}
\newtheorem*{proposition*}{Proposition}
\newtheorem{lemma}[theorem]{Lemma}
\newtheorem*{lemma*}{Lemma}
\newtheorem{corollary}[theorem]{Corollary}
\newtheorem*{corollary*}{Corollary}
\theoremstyle{definition}
\newtheorem{definition}[theorem]{Definition}
\newtheorem{example}[theorem]{Example}

\theoremstyle{remark}
\newtheorem{remark}[theorem]{Remark}

\newtheorem*{conjecture*}{Conjecture}
\usepackage{tikz}

\newcommand{\rr}{\rightrightarrows}

%---------TITLE-------------------------------------------
\author{Fabricio Valencia}
\subjclass[2020]{22A22, 57R18, 37B25, 37C99}
\address{}
\date{\today}

\address{F. Valencia - Instituto de Matem\'atica e Estat\'istica, Universidade de S\~ao Paulo, Rua do Mat\~ao 1010, Cidade Universit\'aria, 05508-090 S\~ao Paulo - Brazil.
      \newline  
      \phantom{xx}
 fabricio.valencia@ime.usp.br}

\title{Lyapunov 1-forms on orbifolds}

%---------DOCUMENT------------------------------------------
\begin{document}
\maketitle

\begin{abstract}
We introduce and analyze a notion of smooth Lyapunov 1-form for flows generated by vector fields on orbifolds. Using asymptotic cycles and chain-recurrent sets, we establish topological conditions that guarantee the existence of a Lyapunov 1-form, lying in a prescribed cohomology class, for a given vector field on a compact orbifold.
\end{abstract}

\tableofcontents
\section{Introduction}

This paper is the third in a series devoted to the study of the geometric, topological, and dynamical properties of closed 1-forms on compact orbifolds. On the one hand, in \cite{V} the author initiated the study of closed 1-forms of Morse type on orbifolds, establishing Novikov-type inequalities that yield topological lower bounds for the number of zeros of a closed 1-form of Morse-type within a given cohomology class. On the other hand, in \cite{LopezGarciaValencia} the authors investigated the topological properties of the leaves of the singular foliation induced by a closed 1-form of Morse-type on a compact orbifold, and extended a celebrated result of Calabi, which provides a purely topological characterization of intrinsically closed harmonic 1-forms of Morse type. In line with these results, the aim of this paper is to introduce and characterize the notion of a Lyapunov 1-form for flows of vector fields on orbifolds, which can be achieved by building on several of the technical results developed in \cite{Hep2,PPT,V}. This work is motivated by the results concerning this notion in \cite{FKLZ1, FKLZ2, Lat}, where the manifold case was treated.

A continuous flow on a topological space $X$ is a continuous map $\Phi:\mathbb{R}\times X\to X$ such that $\Phi_0= \textnormal{id}_X$ and $\Phi_{\tau+\tau'}=\Phi_\tau\circ \Phi_{\tau'}$ for all $\tau,\tau'\in \mathbb{R}$, where $\Phi_\tau:X \to X$ is defined by $\Phi_\tau(x) = \Phi(\tau, x)$ for $\tau \in \mathbb{R}$ and $x\in X$. Accordingly, a \emph{Lyapunov function} for $\Phi$ is a continuous function $L:X\to \mathbb{R}$ which is non-increasing along its orbits, meaning that $L(\Phi_\tau(x))\leq L(x)$ for every $\tau\geq 0$ and $x\in X$. Such functions were introduced by Lyapunov in order to detect stable equilibria. More importantly, since Conley’s seminal works \cite{Conley1,Conley2}, Lyapunov functions have been used to study general dynamical systems, as they are strongly related to chain-recurrence properties. In the case where $X$ is a smooth manifold and $\Phi$ is the flow of a vector field, the problem of studying and characterizing smooth Lyapunov functions has been addressed by several authors. In particular, we refer to the works of Farber–Kappeler–Latschev–Zehnder \cite{FKLZ1,FKLZ2}, Latschev \cite{Lat}, and Fathi–Pageault \cite{FathiPageault}, which serve as an inspiration for the present paper. It is important to say that a more general notion that Lyapunov function, known as a Lyapunov 1-form, was introduced by Farber in \cite{Far}. Compared to Lyapunov functions, these objects allow one to go a step further and analyze the flow within the chain-recurrent set as well. Moreover, Lyapunov 1-forms provide an important tool for applying methods from homotopy theory to the study of dynamical systems. The smooth version of this notion was studied and initially developed in \cite{FKLZ1,FKLZ2,Lat}.

As discussed earlier, our goal is to introduce and analyze a notion of ``smooth Lyapunov 1-form'' for flows generated by vector fields on orbifolds. To this end, we model orbifolds using proper and étale Lie groupoids, which serve as atlases and allow us to regard an orbifold as the orbit space of such a groupoid. To state our main result, we first briefly introduce some terminology, following primarily the conventions and definitions in \cite{V}. Let $X$ be a compact orbifold, let $\overline{v}$ be a vector field on $X$ generating a continuous flow $\overline{\Phi}:\mathbb{R}\times X\to X$, and let $Y\subset X$ denote a closed subset which is flow-invariant.

\begin{definition}\label{Lyapunov1form}
	A closed 1-form $\overline{\omega}$ on $X$ is said to be a \emph{Lyapunov 1-form} for the pair $(\overline{\Phi},Y)$ if it has the following two properties:
	\begin{enumerate}
		\item the function $\iota_{\overline{v}}\overline{\omega}$ is negative on $X-Y$, and
		\item there exists a function $\overline{f}:\tilde{U}\to \mathbb{R}$ defined on an open neighborhood $\tilde{U}$ of $Y$ such that $\overline{\omega}|_{\tilde{U}}=d\overline{f}$ and $d\overline{f}|_Y=0$.
	\end{enumerate}
\end{definition}

In particular, if $\overline{\omega}$ is exact then the previous definition gives rise to a notion of smooth Lyapunov function on $X$ with respect to the flow $\overline{\Phi}$, which turns out to be constant along the components of $Y$.

Our primary objective is to find topological conditions which guarantee that for a given pair $(\overline{v},Y)$ on $X$ there exists a Lyapunov 1-form $\overline{\omega}$ lying in a prescribed cohomology class $\xi\in H^1(X)$. First of all, adapting the Schwartzman's construction for asymptotic cycles \cite{Sch} to the orbifold setting one can define a real homology class $\mathcal{A}_\mu(\Phi)\in H_1(X,\mathbb{R})$, associated to both the cohomology class $\xi$ and a fixed measure $\overline{\mu}$ on $X$ which is invariant under $\overline{\Phi}$. Second, the kernel of the homomorphism of periods of $\xi$ induces an orbifold covering space $\overline{p}_\xi: X_\xi\to X$, which can be used to lift the flow $\overline{\Phi}$ to a continuous flow $\tilde{\Phi}$ on $X_\xi$. Let $\overline{R}(\Phi)$ and $\overline{R}(\tilde{\Phi})$ denote the chain recurrent sets of the flows $\overline{\Phi}$ and $\tilde{\Phi}$, respectively. The chain recurrent set of $\xi$ is by definition $\overline{R}_\xi=\overline{p}_\xi(\overline{R}(\tilde{\Phi}))$, a closed and $\overline{\Phi}$-invariant set that sits inside $\overline{R}(\Phi)$. Its complement $\overline{R}(\Phi)-\overline{R}_\xi$ is denoted by $\overline{C}_\xi$. Third, let $\overline{d}$ be the geodesic distance induced by a fixed Riemannian metric on $X$. Given $\delta>0$ and $T>1$, a  $(\delta,T)$-cycle for the flow $\overline{\Phi}$ is defined as a pair $([x],\tau)$, where $[x]\in X$ and $\tau > T$, such that $\overline{d}(\overline{\Phi}_\tau([x]),[x])< \delta$. Assuming that $\delta$ is small enough, one can construct a unique homology class $z\in H_1(X,\mathbb{Z})$ for any $(\delta,T)$-cycle. As expected, all of the notions discussed above admit a description in terms of a proper étale Lie groupoid presenting $X$.

The main result of this paper can be stated as follows. 

\begin{theorem}\label{Main Thm}
	Let $\overline{v}$ be a vector field on a compact orbifold $X$ and let $\overline{\Phi}:\mathbb{R}\times X\to X$ denote the flow generated by $\overline{v}$. Let $\xi\in H^1(X)$ be a cohomology class such that the restriction $\xi|_{R_\xi}$, viewed as a $\Check{C}$ech cohomology class $\xi|_{R_\xi}\in \Check{H}^1(\overline{R}_\xi,\mathbb{R})$, vanishes. Then, the following properties of $\xi$ are equivalent:
	\begin{enumerate}
		\item[(a)] there exists a Lyapunov 1-form for the pair $(X,\overline{R}_\xi)$ in the cohomology class $\xi$ and the subset $\overline{C}_\xi$ is closed,
		\item[(b)] for any Riemannian metric on $X$ there exist $\delta>0$ and $T>1$ such that the homology class $z\in H_1(X,\mathbb{Z})$ associated with an arbitrary $(\delta,T)$-cycle $([x],\tau)$ of the flow $\overline{\Phi}$, with $[x]\in \overline{C}_{\xi}$, satisfies $\langle \xi,z\rangle \leq -1$,
		\item[(c)] the subset $\overline{C}_\xi$ is closed and there exists a constant $\lambda>0$ such that for any $\overline{\Phi}$-invariant positive measure $\overline{\mu}$ on $X$ the asymptotic cycle $\mathcal{A}_\mu\in H_1(X,\mathbb{R})$ verifies
		\begin{equation}\label{Condition 1}
			\langle \xi, \mathcal{A}_\mu\rangle \leq -\lambda \overline{\mu}(\overline{C}_\xi)
		\end{equation}
		and
		\item[(d)] the subset $\overline{C}_\xi$ is closed and for any $\overline{\Phi}$-invariant positive measure $\overline{\mu}$ on $X$  with $\overline{\mu}(\overline{C}_\xi)>0$, the asymptotic cycle $\mathcal{A}_\mu\in H_1(X,\mathbb{R})$ satisfies 
		\begin{equation}\label{Condition 2}
			\langle \xi, \mathcal{A}_\mu\rangle <0.
		\end{equation}
	\end{enumerate}
\end{theorem}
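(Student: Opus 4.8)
The plan is to prove the cyclic chain of implications $(a)\Rightarrow(b)\Rightarrow(c)\Rightarrow(d)\Rightarrow(a)$. Throughout I would fix a proper étale Lie groupoid presenting $X$, so that $\overline{v}$, $\overline{\Phi}$, the sets $\overline{R}(\Phi)$, $\overline{R}_\xi$, $\overline{C}_\xi$, the orbifold covering $\overline{p}_\xi\colon X_\xi\to X$ together with its lifted flow $\tilde\Phi$ and deck group $\Gamma$, the geodesic distance $\overline{d}$, and the asymptotic cycles $\mathcal{A}_\mu\in H_1(X,\mathbb{R})$ are all encoded by the groupoid-invariant data set up in the previous sections. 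Each implication then reduces to an equivariant statement about the flow on the unit space, or on the cover $X_\xi$, which can be handled by adapting the manifold arguments of \cite{FKLZ1,FKLZ2,Lat} and Schwartzman's asymptotic-cycle formalism \cite{Sch}, with the Conley-theoretic input on orbifolds supplied by \cite{Hep2,PPT,V}. I expect the genuinely hard part to be the constructive implication $(d)\Rightarrow(a)$.

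For $(a)\Rightarrow(b)$ I would integrate a Lyapunov $1$-form $\overline{\omega}$ in the class $\xi$ along $(\delta,T)$-cycles. Since $\overline{C}_\xi$ is closed and disjoint from $\overline{R}_\xi$, compactness yields $c>0$ with $\iota_{\overline{v}}\overline{\omega}\le -c$ on a neighborhood of $\overline{C}_\xi$, while on a $\overline{\Phi}$-invariant neighborhood $\tilde U$ of $\overline{R}_\xi$ one has $\overline{\omega}=d\overline{f}$ with $d\overline{f}|_{\overline{R}_\xi}=0$. For a $(\delta,T)$-cycle $([x],\tau)$ based at $[x]\in\overline{C}_\xi$, the associated class $z$ is represented by the orbit arc $s\mapsto\overline{\Phi}_s([x])$, $s\in[0,\tau]$, closed up by a geodesic of length $<\delta$, so $\langle\xi,z\rangle=\int_0^\tau(\iota_{\overline{v}}\overline{\omega})(\overline{\Phi}_s([x]))\,ds+e$ with $|e|<\delta\sup\|\overline{\omega}\|$. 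A Conley-block estimate --- an orbit issuing from $\overline{C}_\xi$ that returns $\delta$-close cannot be trapped inside $\tilde U$, any excursion into $\tilde U$ contributes a controlled amount through $\overline{f}$, and outside $\tilde U$ the integrand is $\le -c$ --- shows that for $\delta$ small and $T$ large the orbit integral is $\le -2$, and shrinking $\delta$ gives $\langle\xi,z\rangle\le-1$.

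For $(b)\Rightarrow(c)$ I would first deduce that $\overline{C}_\xi$ is closed: otherwise some $[p]\in\overline{R}_\xi$ is a limit of points of $\overline{C}_\xi$, and for every $\delta,T$ one could manufacture $(\delta,T)$-cycles based in $\overline{C}_\xi$ with $\langle\xi,z\rangle>-1$, contradicting $(b)$. Then, using the orbifold form of Schwartzman's identity $\langle\xi,\mathcal{A}_\mu\rangle=\int_X(\iota_{\overline{v}}\overline{\omega}_\xi)\,d\overline{\mu}$ for a closed representative $\overline{\omega}_\xi$ of $\xi$ and a $\overline{\Phi}$-invariant probability measure $\overline{\mu}$, I would realize $\mathcal{A}_\mu$ as a weak-$*$ limit of normalized $(\delta,T)$-cycles: ergodic averaging along a $\overline{\mu}$-generic orbit produces $(\delta,T)$-cycles $([x_n],\tau_n)$ with $\tfrac{1}{\tau_n}z_n\to\mathcal{A}_\mu$ and with the fraction of $[0,\tau_n]$ spent in $\overline{C}_\xi$ tending to $\overline{\mu}(\overline{C}_\xi)$. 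Cutting each long cycle into sub-arcs that are themselves $(\delta,T)$-cycles based in $\overline{C}_\xi$, each contributing $\le-1$, plus negligible remainders, and passing to the limit gives $\langle\xi,\mathcal{A}_\mu\rangle\le-\lambda\,\overline{\mu}(\overline{C}_\xi)$ with $\lambda>0$ depending only on $\delta,T$ and the metric; this is $(c)$. The implication $(c)\Rightarrow(d)$ is immediate, since $\overline{\mu}(\overline{C}_\xi)>0$ then forces $\langle\xi,\mathcal{A}_\mu\rangle\le-\lambda\,\overline{\mu}(\overline{C}_\xi)<0$.

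The crux is $(d)\Rightarrow(a)$. Assuming $(d)$, hence $\overline{C}_\xi$ closed, I would first recover the $(\delta,T)$-cycle condition by a compactness argument: if it failed for all $\delta,T$, then normalizing the occupation measures of a sequence of bad almost-closed orbit arcs based in $\overline{C}_\xi$ and extracting a weak-$*$ limit would produce a $\overline{\Phi}$-invariant $\overline{\mu}$ with $\overline{\mu}(\overline{C}_\xi)>0$ and $\langle\xi,\mathcal{A}_\mu\rangle\ge0$, contradicting $(d)$. Then I would build the form on the covering $\overline{p}_\xi\colon X_\xi\to X$: since the pullback of $\xi$ to $X_\xi$ is exact, a Lyapunov $1$-form in the class $\xi$ amounts to a smooth $\tilde f\colon X_\xi\to\mathbb{R}$ that is $\rho$-equivariant for the period homomorphism $\rho\colon\Gamma\to\mathbb{R}$, strictly decreasing along non-constant orbits of $\tilde\Phi$ outside $\overline{R}(\tilde\Phi)$, and flat along $\overline{R}(\tilde\Phi)$; this is the Conley-theoretic construction of a complete Lyapunov function for $\tilde\Phi$ on the non-compact orbifold $X_\xi$, rendered $\rho$-equivariant by averaging over a fundamental domain and twisting by $\rho$, with the cycle condition just recovered ensuring that the strict decrease is realizable in the prescribed cohomology direction, and with constancy on chain components yielding flatness along $\overline{R}(\tilde\Phi)$. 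Finally, the hypothesis that $\xi|_{\overline{R}_\xi}$ vanishes in $\check{H}^1(\overline{R}_\xi,\mathbb{R})$, together with the tautness of Čech cohomology on the compact set $\overline{R}_\xi$, provides a $\overline{\Phi}$-invariant neighborhood of $\overline{R}_\xi$ on which $\xi$ restricts to $0$; combined with the flatness of $\tilde f$ this yields property $(2)$ of Definition~\ref{Lyapunov1form}, and a groupoid-equivariant smoothing upgrades everything to a smooth Lyapunov $1$-form on $X$, proving $(a)$. The main obstacle is precisely this implication: one must perform the complete-Lyapunov-function construction equivariantly on the non-compact orbifold covering $X_\xi$ and descend it compatibly with the groupoid presentation, while simultaneously arranging the prescribed cohomology class, local exactness and flatness near $\overline{R}_\xi$ (where the Čech hypothesis is indispensable), and smoothness; the remaining implications are more routine adaptations of \cite{FKLZ1,FKLZ2,Lat}, the orbifold-specific care being confined to the use of orbifold geodesics for $\overline{d}$, groupoid-invariant partitions of unity, and the identification of asymptotic cycles and $H_1(X,\mathbb{R})$ with their groupoid counterparts.
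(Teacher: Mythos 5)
Your chain (a)$\Rightarrow$(b)$\Rightarrow$(c)$\Rightarrow$(d) follows essentially the paper's route, which adapts Proposition~4, Theorem~2 and Lemma~6 of \cite{FKLZ2} (closedness of $\overline{C}_\xi$, quasi-regular points and the ergodic decomposition, and the linear decay estimate for $\int_{\overline{\gamma}_x}\overline{\omega}$), and the orbifold-specific points you flag (the submetry $\pi$, cut-off functions, the groupoid Poincar\'e lemma) are exactly the ones the paper invokes. The genuine gap is in (d)$\Rightarrow$(a). The heart of that implication is to convert the hypothesis ``$\langle\xi,\mathcal{A}_\mu\rangle<0$ for every invariant measure charging $\overline{C}_\xi$'' into the existence of a closed $1$-form $\omega_1$ \emph{in the class $\xi$} with $\iota_{\overline{v}}\overline{\omega}_1<0$ on $\overline{C}_\xi$. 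This is a duality statement: the paper, following Schwartzman \cite{Sch} and \cite{FKLZ1}, separates via Hahn--Banach the open cone $\mathcal{C}^-=\{\overline{f}: \overline{f}<0 \text{ on } \overline{C}_\xi\}$ from the affine subspace $\iota_{\overline{v}}\overline{\omega}+\mathcal{D}$, with $\mathcal{D}=\{\overline{v}(\overline{f})\}$, shows the separating functional is a $\overline{\Phi}$-invariant measure with $\overline{\mu}(\overline{C}_\xi)>0$ and $\langle\xi,\mathcal{A}_\mu\rangle\geq 0$, and contradicts (d) using $\xi|_{\overline{R}_\xi}=0$. You replace this by ``a complete Lyapunov function for $\tilde{\Phi}$ on $X_\xi$, rendered $\rho$-equivariant by averaging over a fundamental domain and twisting by $\rho$.'' This does not work as stated: the deck group is infinite, so the averaging must use a partition of unity subordinate to translates of a fundamental domain, and differentiating the twisted sum along the flow produces terms involving derivatives of the cut-off functions whose signs are uncontrolled, so strict decrease on $\overline{C}_\xi$ is exactly what is lost. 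Nowhere in your outline does hypothesis (d) actually enter the construction of the descending direction within the class $\xi$; the phrase ``the cycle condition ensures the strict decrease is realizable in the prescribed cohomology direction'' is the entire content of the implication and is left unproved.

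Two secondary problems. First, in your preliminary step recovering the cycle condition from (d), a weak-$*$ limit of normalized occupation measures of almost-closed arcs based in $\overline{C}_\xi$ need not satisfy $\overline{\mu}(\overline{C}_\xi)>0$: the arcs may spend an asymptotically full fraction of their time near $\overline{R}_\xi$, in which case the limit is supported there, $\langle\xi,\mathcal{A}_\mu\rangle=0$ by the hypothesis $\xi|_{\overline{R}_\xi}=0$, and no contradiction with (d) arises. Second, even granting a form $\omega_1$ representing $\xi$ with $\iota_{\overline{v}}\overline{\omega}_1<0$ on $\overline{C}_\xi$, one still must extend negativity to all of $X-\overline{R}_\xi$: the paper does this by writing $\omega_1=dh_1$ on a neighborhood $W_2$ of $\overline{R}_\xi$ (using $\xi|_{\overline{R}_\xi}=0$ and $G$-invariant partitions of unity), taking the orbifold Conley Lyapunov function $\overline{L}$ of Lemma~\ref{OrbifoldConley} for $(\overline{\Phi},\overline{R}(\Phi))$, and setting $\omega_2=\omega_1-dh_1+a\,dL$ with $a$ large. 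Some version of this patching is unavoidable, since $\omega_1$ alone controls the flow only on $\overline{C}_\xi$ and says nothing on the gradient-like region $X-\overline{R}(\Phi)$; your outline compresses this into the covering-space construction without supplying it.
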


It is worth mentioning that, building on the works \cite{Hep2,PPT,V}, a similar result can be established for singular spaces more general than orbifolds, provided they can still be described as orbit spaces of proper Lie groupoids. Particularly interesting examples that fit our general assumptions are the singular orbit spaces arising as global quotients of compact Lie group actions.

The paper is structured as follows. In Section \ref{S:2}, we briefly introduce the main notions and terminology used throughout the paper. More precisely, we define basic vector fields and differential forms on proper étale Lie groupoids, which in turn model vector fields and differential forms on orbifolds. The notion of flows on orbifolds, and, more generally, on separated differentiable stacks, was studied in detail in \cite{Hep, Hep2}. We also define measures on orbifolds by following the elegant approach of \cite{CM}, which is based on transverse measures on Lie groupoids. Additionally, we introduce $G$-paths and define the homomorphism of periods of a given cohomology class, notions that were extensively studied in \cite{V}. Section \ref{S:3} is devoted to define and explore our notion of smooth Lyapunov 1-forms on orbifolds. We provide an alternative formulation of condition (2) in Definition \ref{Lyapunov1form}, and we show that the two formulations are equivalent if and only if either the chosen cohomology class is integral or the closed flow-invariant subset of the orbifold admits a neighborhood retraction, see Lemmas \ref{lema1} and \ref{Lema2}, respectively. We define asymptotic cycles and use them to determine a necessary condition for the existence of Lyapunov 1-forms on an orbifold. This is the content of Proposition \ref{Proposition1}. We also introduce chain recurrent sets for flows over orbifolds by using $(\delta,T)$-chains, $\delta>0$ and $T>1$. In particular, we define the chain-recurrent set associated with the cohomology class of a closed 1-form and establish the existence of a unique homology class for any closed $(\delta,T)$-chain, provided $\delta$ is sufficiently small. In Section \ref{S:4}, we prove Theorem \ref{Main Thm}. This is done by using the fact that our orbifolds can be regarded as compact, locally path-connected metric spaces, allowing us to apply the machinery developed in the previous sections to adapt, to the orbifold setting, the key arguments in the proof of Theorem 1 from \cite{FKLZ1}. Those arguments, in turn, rely on the analysis of continuous flows on compact, locally path-connected metric spaces carried out in \cite{FKLZ2}. Finally, motivated by the constructions developed in \cite[s. 7]{FKLZ2}, in Section \ref{S:5} we provide some examples which allow us to illustrate our main result.

\vspace{.2cm}
{\bf Acknowledgments:} Valencia was supported by Grant 2024/14883-6 Sao Paulo Research Foundation - FAPESP.

\section{Vector fields, differential forms, and measures}\label{S:2}

This section is devoted to briefly introduce the main notions and terminology that we will be using throughout the paper. We assume that the reader is familiar with the notion of Lie groupoid and the geometric/topological aspects underlying its structure \cite{dH,MoMr0}. Let $G\rr M$ be a Lie groupoid. The structural maps of $G$ are denoted by $(s,t,m,u,i)$ where $s,t:G\to M$ are the maps respectively indicating the source and target of the arrows, $m:G^{(2)}\to G$ stands for the partial composition of arrows, $u:M\to G$ is the unit map, and $i:G\to G$ is the map determined by the 
inversion of arrows, which sometime we shall denote by $u(x):=1_x$ for all $x\in M$. The orbit space $M/G$ of $G\rr M$ is denoted by $X$ and the canonical orbit projection by $\pi:M\to X$.

We say that a Lie groupoid $G\rr M$ is \emph{proper} if the source/target map $(s,t):G\to M\times M$ is proper. In this case, the groupoid orbits $\mathcal{O}_x$ are embedded in $M$, the isotropy groups $G_x$ are compact, and the orbit space $X$ is Hausdorff, second-countable, and paracompact \cite{dH}. Moreover, the orbit projection $\pi:M\to X$ is an open map. The Lie groupoid is said to be \emph{étale} if either $s$ or $t$ is a local diffeomorphism, meaning that $G$ and $M$ have the same dimension. We use proper étale groupoids as orbifold atlases, so that we can think of an orbifold as being the orbit space of a Lie groupoid of this kind \cite{BX,Ler}. Such a point of view turns out to be quite useful when dealing with several geometric and algebraic notions concerning the structure of an orbifold, or even a singular orbit space in general.

From now on, unless otherwise stated, we assume that $G\rr M$ is a proper étale Lie groupoid presenting an orbifold $X$.

\subsection{Basic vector fields and differential forms}

We can introduce our models for both vector fields and differentiable forms on $X$ by following the groupoid approach described for instance in \cite{HoffmanSjamaar,LerMal}. First, recall that given a surjective submersion $p:M\to M'$ of manifolds, a vector field $v\in \mathfrak{X}(M)$ is $p$-\emph{related} to a vector field $v'\in \mathfrak{X}(M')$ if $dp(v)=v'\circ p$. This is equivalent to saying that the local flows of $v$ and $v'$ commute with $p$. Note that if $v'$ is given then $v$ is determined up to a section of the bundle $\ker(dp)\subset TM$. Vector fields on $X$ are completely determined by basic vector fields on $M$. More precisely, the set of \emph{basic vector fields} on $M$, denoted by $\mathfrak{X}_{\textnormal{bas}}(G)$, is by definition the quotient
$$\mathfrak{X}_{\textnormal{bas}}(G)=\frac{\lbrace (v_1,v_0)\in \mathfrak{X}(G)\times \mathfrak{X}(M): ds(v_1)=v_0\circ s,\ dt(v_1)=v_0\circ t \rbrace}{\lbrace (v_1,v_0): ds(v_1)=v_0\circ s,\ dt(v_1)=v_0\circ t,\ v_1\in (\ker(ds)+\ker(dt)) \rbrace}.$$

That is, a basic vector field is not strictly a vector field, but a pair of equivalence classes of vector fields. Note that a basic vector field $\overline{v}= (\overline{v}_1, \overline{v}_0)$ is determined by its 2st component $\overline{v}_0$. It follows that $\mathfrak{X}_{\textnormal{bas}}(G)$ is a Lie algebra, and, more importantly, one can think of the Lie algebra of vector fields $\mathfrak{X}(X)$ as being equal to $\mathfrak{X}_{\textnormal{bas}}(G)$. This is consequence of the fact that the notion of basic vector field is Morita invariant \cite{HoffmanSjamaar,LerMal}.

\begin{remark}
	For future purposes, if $\overline{v}$ is a vector field on $X$ presented by a pair of $s,t$-related vector fields $(v_1,v_0)\in \mathfrak{X}(G)\times \mathfrak{X}(M)$ then we think of the local flow of $\overline{v}$ as being a 1-parametric family $\overline{\Phi}$ of local orbifold automorphisms of $X$ covered by the local flow of $v_0$ which, in turn, induces a 1-parametric family $\Phi$ of local automorphisms of $G$. That is to say, $\pi\circ \Phi =\overline{\Phi}\circ (\textnormal{id}\times \pi)$, where $\pi:M\to X$ stands for the orbit projection. This is motivated from the fact that the Lie algebra of basic vector fields is quasi-isomorphic to the Lie 2-algebra of multiplicative vector fields, consult \cite[s. 5.4]{HoffmanSjamaar}. Besides, particular instances of this situation are provided by orbifolds arising as global quotients of compact Lie group actions \cite[s. 6]{Hep2}.
	
	The notions of vector fields and flows for orbifolds, and more generally, for separated differentiable stacks, were studied and explored in detail by Hepworth in \cite{Hep,Hep2}. 
\end{remark}

Second, it is well-known that differential forms on $X$ are completely determined by basic forms on $M$, compare also \cite{PPT,TuX,Wa}. That is, differential forms $\omega$ on $M$ such that $(t^\ast-s^\ast)(\omega)=0$. The space of basic differential forms on $M$ is denoted by $\Omega_{\textnormal{bas}}^\bullet(G)$ and it is identified with the space of differential forms on $X$ that, in turn, is denoted by $\Omega^\bullet(X)$. Note that the de Rham differential over $\Omega^\bullet(M)$ restricts to $\Omega_{\textnormal{bas}}^\bullet(G)$, thus yielding the so-called \emph{basic cohomology} $H_{\textnormal{bas}}^\bullet(G,\mathbb{R})$ of $G$. Such a cohomology is Morita invariant, so that its recover the de Rham cohomology $H^\bullet(X)$ of $X$. Additionally, there is an isomorphism $H^\bullet(X,\mathbb{R})\cong H_{\textnormal{bas}}^\bullet(G,\mathbb{R})$, where $H^\bullet(X,\mathbb{R})$ stands for the singular cohomology of $X$. It is worth mentioning that $X$ can be triangulated so that its cohomology $H^\bullet(X)\approx H_{\textnormal{bas}}^\bullet(G,\mathbb{R})$ becomes a finite dimensional vector space \cite{PPT}. It is important to have in mind that one also has a groupoid version of the Poincaré Lemma, saying that for each groupoid orbit $\mathcal{O}$ there exist an open neighborhood $\mathcal{O}\subset U\subset M$ and a basic smooth function $f_U\in \Omega_{\textnormal{bas}}^0(G|_{U})$ such that $\omega|_U=df_U$, see \cite[Lem. 8.5]{PPT}.

If we identify the space of basic forms $\Omega_{\textnormal{bas}}^\bullet(G)$ with the set of pairs $\lbrace (\theta_1,\theta_0)\in \Omega^\bullet(G)\times \Omega^\bullet(M): s^\ast(\omega_0)=\omega_1= t^\ast(\omega_0)\rbrace$ then we have contraction operations and Lie derivatives $\iota: \mathfrak{X}_{\textnormal{bas}}(G)\times \Omega_{\textnormal{bas}}^\bullet(G)\to \Omega_{\textnormal{bas}}^{\bullet-1}(G)$ and $\mathcal{L}:\mathfrak{X}_{\textnormal{bas}}(G)\times \Omega_{\textnormal{bas}}^\bullet(G)\to \Omega_{\textnormal{bas}}^{\bullet}(G)$ respectively defined by

$$\iota_{\overline{v}}\omega=(\iota_{v_1}\omega_1,\iota_{v_0}\omega_0)\quad\textnormal{and}\quad \mathcal{L}_{\overline{v}}\omega=(\mathcal{L}_{v_1}\omega_1,\mathcal{L}_{v_0}\omega_0),$$
where $(v_1,v_0)\in \mathfrak{X}(G)\times \mathfrak{X}(M)$ is a representative of $\overline{v}$. As expected, these expressions do not depend on the choice of $(v_1,v_0)$, see \cite{HoffmanSjamaar}.

\subsection{Transverse measures}

Measures over orbifolds, and more generally, over separated differentiable stacks were recently studied and explored in detail by Crainic--Mestre in \cite{CM}. We begin this subsection by considering a proper Lie groupoid $G\rr M$ presenting an orbit space $X$, and later specialize to the particular case of orbifolds. Recall that one can think of the algebra of smooth functions on $X$ as
$$C^\infty(X)=\lbrace \overline{f}:X\to \mathbb{R}: f\circ \pi \in C^\infty(M)\rbrace\cong C^\infty(M)^G,$$
where $C^\infty(M)^G=\lbrace f\in C^\infty(M): f\circ s=f\circ t \rbrace$ stands for the algebra of smooth basic functions on $M$. The algebra of compactly supported smooth functions is defined as usual, by just adding the compact support condition:
$$C^\infty_c(X)=\lbrace \overline{f}\in C^\infty(X): \textnormal{supp}(\overline{f})-\textnormal{compact} \rbrace.$$

Of course, one can define similarly $C^k_c(X)$ for any integer $k\geq 0$. For $k=0$, since $X$ is endowed with the quotient topology, one recovers the usual space of compactly supported continuous functions on the space $X$. 

If $\mu$ is a measure on $X$ then we denote by $I_\mu:C_c(X)\to \mathbb{R}$ its corresponding integration map which is defined by $I_\mu(\overline{f})=\int_X\overline{f}([x])d\mu([x])$. As in the case of smooth manifolds, one has that (see \cite[Prop. 5.2]{CM}):
\begin{itemize}
\item $C^\infty_c(X)$ is dense in $C_c(X)$,
\item any positive linear functional on $C_c(X)$ is automatically continuous, with the same holding for $C^\infty_c(X)$ with the induced topology, and
\item the construction $(\mu\mapsto \mu|_{C^\infty_c(X)})\cong (I\mapsto I|_{C^\infty_c(X)})$ induces a one-to-one correspondence between measures on $X$ and positive linear functionals on $C^\infty_c(X)$.
\end{itemize}

More importantly, there exists a one-to-one correspondence between transverse measures on $G\rr M$ and measures on $X$. In order to be precise, we need to introduce some terminology. The density bundle of any vector bundle $E$ is denoted by $\mathcal{D}_E$, and its sections are called \emph{densities} on $E$. We denote by $\mathcal{D}(M)$ the space of densities on $TM$. Recall that the measure associated with a positive density $\rho\in \mathcal{D}(M)$ is by definition
$$\mu_\rho: C^\infty_c(M)\to \mathbb{R},\qquad \mu_\rho(f)=\int_M f\cdot \rho,$$
where $\int_M:C^\infty_c(M,\mathcal{D}_{TM})\to \mathbb{R}$ is the canonical integration on $M$ (consult e.g. \cite[p. 1243]{CM}). Measures of this type are called \emph{geometric measures}. 

Let us consider a surjective submersion $p:M\to M'$. We define the \emph{push-forward} $p_!(I)(f):=I(f\circ p)$, or, in the integral notation
$$\int_{M'}f(x')d\mu_{p_!(I)}(x'):=\int_M f(p(x)) d\mu(x),$$
for any $x\in p^{-1}(x')$. It follows that the construction $I\mapsto p_!(I)$ takes geometric measures on $M$ to geometric measures on $M'$. This is because integration over the fibers makes sense for densities, so that $p_!=\int_{\textnormal{fiber}}:\mathcal{D}(M)\to \mathcal{D}(M')$, where $p_!(\rho)(x'):=\int_{p^{-1}(x')}\rho\in \mathcal{D}_{T_{x'}M'}$ since we can interpret $\rho|_{p^{-1}(x')} \in \mathcal{D}(p^{-1}(x'))\otimes \mathcal{D}_{T_{x'}M'}$. Additionally, by the Fubini formula for densities it holds that this operation is compatible with the one on measures, i.e. that $p_!(I_\rho)=I_{p_!(\rho)}$.

Let $A\to M$ denote the Lie algebroid of $G$. Following \cite[s. 3.3]{CM}, one has canonical integrations over the $s,t$-fiber maps
$$s_!,t_!:C^\infty_c(G,t^\ast \mathcal{D}_A\otimes s^\ast \mathcal{D}_A)\to C^\infty_c(M,\mathcal{D}_A).$$

A \emph{transverse measure} on $G\rr M$ is a positive linear map $\mu:C^\infty_c(M,\mathcal{D}_A)\to \mathbb{R}$ satisfying the invariance condition $\mu\circ s_!=\mu\circ t_!$. These kinds of measures always exist and can be constructed by using averaging techniques, compare \cite[Prop. 5.1]{CM}. Furthermore, there is an intrinsic averaging map
$$\textnormal{Av}: C^\infty_c(M,\mathcal{D}_A)\to C^\infty_c(X),\qquad \textnormal{Av}(\rho)(\pi(x))=\int_{s^{-1}(x)}\rho|_{s^{-1}(x)},$$
which restrict to an isomorphism between $C^\infty_c(M,\mathcal{D}_A)/\textnormal{im}(s_!-t_!)$ and $C^\infty_c(X)$, thus yielding a one-to-one correspondence between transverse measures on $G\rr M$ and measures on $X$.

In the particular case that $X$ is an orbifold, we obtain that $A$ is trivial, as $G\rr M$ is étale, so that the above correspondence reads as follows. There exists a one-to-one correspondence between measures $\overline{\mu}$ on $X$ and $G$-invariant measures $\mu$ on $M$ which is uniquely determined by $\overline{\mu}=\mu\circ \pi_!$ where
$$\pi_!:C^\infty_c(M)\to C^\infty_c(X),\qquad \pi_!(f)(\pi(x))=\sum_{g\in s^{-1}(x)}f(t(g)).$$ 

One can write down a relation between $\mu$ and $\overline{\mu}$ by using a \emph{cut-off function}, i.e. a smooth positive function $c:M\to \mathbb{R}$ such that:
\begin{itemize}
\item the restriction of $s$ to $t^{-1}(\textnormal{supp}(c))$ is proper (as a map to $M$), and 
\item $\int_{s^{-1}(x)}c(t(g))d\mu^x(d)=1$ for all $x\in M$, where $\lbrace \mu^x\rbrace_{x\in M}$ is a proper Haar measure system for $G$.
\end{itemize}

Cut-off functions always exist over proper Lie groupoids. In particular, we have the following relation
$$\int_X \overline{f}([x])d\overline{\mu}([x])=\int_M c(x)f(x)d\mu(x).$$

As expected, such a relation does not depend on the choice of the cut-off function. See \cite{CM} for concrete details.

\subsection{$G$-path integrals}

Let us introduce the notion of $G$-path in $M$ which is a crucial concept in this paper. The reader is recommended to consult \cite[s. 3.3]{MoMr} and \cite[c. G; s. 3]{BH} for specific details. We think of paths in $X$ as $G$-paths in $M$. Namely, a \emph{$G$-path} in $M$ is a sequence $\sigma:=\sigma_ng_n\sigma_{n-1}\cdots \sigma_1g_1\sigma_0$ where $\sigma_0,\cdots,\sigma_n:[0,1]\to M$ are (piecewise smooth) paths in $M$ and $g_1,\cdots,g_n$ are arrows in $G$ such that $g_j:\sigma_{j-1}(1)\to \sigma_{j}(0)$ for all $j=1,\cdots,n$. One says that $\sigma$ is a $G$-path of \emph{order} $n$ from $\sigma_{0}(0)$ to $\sigma_{n}(1)$. Our groupoid $G$ is said to be \emph{$G$-connected} if for any two points $x,y\in M$ there exists a $G$-path from $x$ to $y$. We always assume that the groupoids we are working with are $G$-connected unless otherwise stated. If $\sigma':=\sigma_n'g_n'\sigma_{n-1}'\cdots \sigma_1'g_1'\sigma_0'$ is another $G$-path in $M$ with $\sigma_{0}'(0)=\sigma_{n}(1)$ then we can \emph{concatenate} $\sigma$ and $\sigma'$ into a new $G$-path 
$$\sigma\ast \sigma'=\sigma_n'g_n'\sigma_{n-1}'\cdots \sigma_1'g_1'\sigma_0'1_{\sigma_{n}(1)}\sigma_ng_n\sigma_{n-1}\cdots \sigma_1g_1\sigma_0.$$

We define an equivalence relation in the set of $G$-paths in $M$ which is generated by the following \emph{multiplication equivalence}
$$\sigma_n g_n\cdots \sigma_{j+1}g_{j+1}\sigma_jg_j\sigma_{j-1}\cdots g_1\sigma_0 \quad\sim\quad \sigma_n g_n\cdots \sigma_{j+1}g_{j+1}g_j\sigma_{j-1}\cdots g_1\sigma_0,$$
if $\sigma_j$ is the constant path for any $0<j<n$, and  \emph{concatenation equivalence}
$$\sigma_n g_n\cdots g_{j+1}\sigma_jg_j\sigma_{j-1}g_{j-1}\cdots g_1\sigma_0 \quad\sim\quad \sigma_n g_n\cdots g_{j+1}\sigma_j\cdot\sigma_{j-1}g_{j-1}\cdots g_1\alpha_0,$$
if $g_j=1_{\alpha_{j-1}(1)}$ for any $0<j<n$ where $\sigma_j\cdot\sigma_{j-1}$ stands for the standard concatenation of the paths $\sigma_j$ and $\sigma_{j-1}$. A \emph{deformation} between two $G$-paths $\sigma$ and $\sigma'$ of the same order $n$ from $x$ to $y$ consist of homotopies $D_j:[0,1]\times [0,1]\to M$ from $\sigma_j$ to $\sigma_j'$ for $j=0,1,\cdots, n$ and paths $d_i:[0,1]\to G$ from $g_j$ to $g_j'$ for $j=1,\cdots, n$ such that $s\circ d_j=D_{j-1}(\cdot,1)$ and $t\circ d_j=D_{j}(\cdot,0)$ for $j=1,\cdots, n$ verifying $D_0([0,1],0)=x$ and $D_n([0,1],1)=y$. That is, a deformation as a continuous family of $G$-paths of order $n$ from $x$ to $y$ which may be written as $D_n(\tau,\cdot)d_n(\tau)\cdots d_1(\tau)D_0(\tau,\cdot)$ for $\tau\in [0,1]$. Accordingly, two $G$-paths with fixed endpoints in $M$ are said to be \emph{$G$-homotopic} if it is possible to pass from one to another by a sequence of equivalences and deformations.

With the multiplication induced by concatenation it follows that the $G$-homotopy classes of $G$-paths form a Lie groupoid over $M$ which is called the \emph{fundamental groupoid} of $G$ and is denoted by $\Pi_1(G)\rr M$. The \emph{fundamental group} of $G$ with respect to a base-point $x_0\in M$ is the isotropy group $\Pi_1(G, x_0):= \Pi_1(G)_{x_0}$. Note that it consists of $G$-homotopy classes of $G$-loops at $x_0$ which are by definition the $G$-homotopy classes of $G$-paths from $x_0$ to $x_0$. It is simple to check that for any two different points $x_0,y_0\in M$ it holds that $\Pi_1(G, x_0)$ and $\Pi_1(G, y_0)$ are isomorphic by $G$-connectedness. Every Lie groupoid morphism $\phi_1:G\to G'$ covering $\phi_0:M\to M'$ induces another Lie groupoid morphism $(\phi_1)_\ast:\Pi_1(G)\to \Pi_1(G')$ by mapping $[\sigma]$ to $[(\phi_1)_\ast(\sigma)]$, where $(\phi_1)_\ast(\sigma)=(\phi_0)_\ast(\sigma_n)\phi_1(g_n)(\phi_0)_\ast(\sigma_{n-1})\cdots (\phi_0)_\ast(\sigma_1)\phi_1(g_0)(\phi_0)_\ast(\sigma_0)$. This also covers $\phi_0:M\to M'$ so that it induces a Lie group homomorphism between the fundamental groups  $(\phi_1)_\ast:\Pi_1(G,x_0)\to \Pi_1(G',\phi_0(x_0))$. As an important feature, we have that Morita equivalent groupoids have isomorphic fundamental groupoids, so that we define the \emph{orbifold fundamental group} of $X$ at $[x_0]$ as $\Pi_1^\textnormal{orb}(X,[x_0])=\Pi_1(G,x_0)$.

Let $\overline{\omega}$ be a closed 1-form on $X$ presented by a closed basic 1-form $\omega$ on $M$. For each smooth $G$-path $\sigma=\sigma_ng_n\sigma_{n-1}\cdots \sigma_1g_1\sigma_0$ in $M$ we define the $G$-\emph{path integral}
$$\int_{\sigma}\overline{\omega}=\sum_{k=0}^{n}\int_{\sigma_k}\omega.$$

If $[\sigma]$ denotes the $G$-homotopy class of $\sigma$ then the expression $\int_{[\sigma]}\overline{\omega}=\int_{\sigma}\overline{\omega}$ is well-defined. More importantly, suppose that $\omega$ and $\omega'$ are cohomologous closed basic 1-forms. That is, there is a basic smooth function $f:M\to \mathbb{R}$ such that $\omega-\omega'=df$. Hence, it is simple to check that $\int_\sigma (\omega-\omega')=f(\sigma_n(1))-f(\sigma_0(0))$, implying that the expression $\int_{[\sigma]}\xi=\int_{\sigma}\overline{\omega}$ is also well-defined for the cohomology class $\xi=[\omega]\in H^\bullet(X)$, provided that $\sigma$ is a $G$-loop. Additionally, we get a well-defined group homomorphism $\textnormal{Per}_{\xi}:\Pi_1^\textnormal{orb}(X,[x_0])\to (\mathbb{R},+)$ by sending $[\sigma]\mapsto \int_{\sigma}\overline{\omega}$, which is completely determined $\xi$. We refer to $\textnormal{Per}_{\xi}$ as the \emph{homomorphism of periods} associated with the cohomology class $\xi$. See \cite{V} for details.

\section{Lyapunov basic 1-forms}\label{S:3}

Let $X$ be an orbifold presented by a proper étale Lie groupoid $G\rr M$. From now on, we fix a vector field $\overline{v}$ on $X$ generating a continuous flow $\overline{\Phi}:\mathbb{R}\times X\to X$ and a closed subset $Y\subset X$ which is flow-invariant \cite{Hep,Hep2}. We assume that $\overline{v}$ is presented by a pair of vector fields $(v_1,v_0)\in \mathfrak{X}(G)\times \mathfrak{X}(M)$ where $v_1$ is $s,t$-related to $v_0$ such that:
\begin{itemize}
\item $v_0$ generates a continuous flow $\Phi:\mathbb{R}\times M\to M$ covering $\overline{\Phi}$, i.e. $\pi\circ \Phi =\overline{\Phi}\circ (\textnormal{id}\times \pi)$, where $\pi:M\to X$ stands for the orbit projection, and
\item if $H\rr N$ is a topological subgroupoid of $G\rr M$, with $N\subset M$ closed, presenting $Y$ then $N$ is flow-invariant with respect to the flow of $v_0$. We may think of $N$ as being $\pi^{-1}(Y)$.
\end{itemize}

Motivated by the notion of smooth Lyapunov 1-form introduced in \cite{FKLZ1,FKLZ2} for flows on manifolds, we set up the following definition.

\begin{definition}\label{MainDefinition}
A closed 1-form $\overline{\omega}$ on $X$ is said to be a \emph{Lyapunov 1-form} for the pair $(\overline{\Phi},Y)$ if it has the following two properties:
\begin{enumerate}
\item the function $\iota_{\overline{v}}\overline{\omega}$ is negative on $X-Y$, and
\item there exists a function $\overline{f}:\tilde{U}\to \mathbb{R}$ defined on an open neighborhood $\tilde{U}$ of $Y$ such that $\overline{\omega}|_{\tilde{U}}=d\overline{f}$ and $d\overline{f}|_Y=0$.
\end{enumerate}
\end{definition}

Let $\omega$ be a closed basic 1-form on $M$ presenting $\overline{\omega}$. Thus, a couple of observations come in order. First, recall that the function in (1) is given by $\iota_{\overline{v}}\overline{\omega}=\iota_{v_0}\omega$, expression that is well-defined since $\omega$ is basic. Therefore, condition (1) amounts to asking that the function $\iota_{v_0}\omega$ is negative on $M-N$. Second, if $U=\pi^{-1}(\tilde{U})$ is the corresponding neighborhood of $N$ in $M$ then there is a smooth basic function $f:U\to\mathbb{R}$ presenting $\overline{f}$ such that $\omega|_{U}=df$ and $df|_N=0$. In fact, the latter requirement is actually equivalent to condition (2).

\begin{remark}
If $\omega$ is additionally exact, i.e. there is a smooth basic function $f:M\to \mathbb{R}$ such that $\omega= df$, then Definition \ref{MainDefinition} gives rise to a notion of Lyapunov function for orbifolds. In fact, condition (1) implies that, for any $x\in M-N$ and $\tau>0$, it holds that $\int_{\gamma_x} df =f(\Phi_\tau(x))-f(x)<0$, thus obtaining $f(\Phi_\tau(x))<f(x)$, where $\gamma_x(\tau)$ denotes the integral curve of $v_0$ starting at $x$. Additionally, condition (2) implies that $f$ is constant along the connected components of $N$. Therefore, the induced function $\overline{f}:X\to \mathbb{R}$ enjoys the same properties over $Y$, so that we might think of it as being a Lyapunov function for $(\overline{\Phi},Y)$.
\end{remark} 

As in the manifold case, there are several alternatives for condition (2), for instance:
\begin{itemize}
\item[(2')] The 1-form $\omega$, viewed as a smooth section $\omega:M\to T^\ast M$, vanishes on $N$.
\end{itemize}

Of course, (2) implies (2'). The converse is also true in some special cases, as next results show.

The basic cohomology class $\xi$ of $\omega$ is said to be \emph{integral}, i.e. $\xi\in H^1_{\textnormal{bas}}(G,\mathbb{Z})$, if its homomorphism of periods $\textnormal{Per}_{\xi}$ takes values in $(\mathbb{Z},+)$, see \cite{V}.

\begin{lemma}\label{lema1}
If the basic cohomology class $\xi$ of $\omega$ is integral then conditions (2) and (2') are equivalent.
\end{lemma}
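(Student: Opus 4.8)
The plan is to show that condition (2$'$) already forces the existence of a local primitive for $\omega$ near $N$ which is constant on $N$, provided the period homomorphism is integer-valued. The key point is that the obstruction to upgrading (2$'$) to (2) is exactly that $\xi|_N$ might be a nonzero class, but integrality of $\xi$ makes $\textnormal{Per}_{\xi}$ discrete, hence locally trivial in a way that can be exploited. First I would pick a $G$-invariant open neighbourhood $U$ of $N$ in $M$ that deformation retracts onto $N$ in the groupoid sense — using the tube/slice theorem for proper étale groupoids together with the fact that $N = \pi^{-1}(Y)$ is closed and saturated, one can build such a $U$ (this is where we invoke the groupoid Poincaré-type machinery and the triangulability of $X$ from \cite{PPT}); on each such $U$ the inclusion induces a surjection $\Pi_1^{\textnormal{orb}}(Y)\to \Pi_1^{\textnormal{orb}}(\tilde U)$ whose composite with $\textnormal{Per}_\xi$ lands in $\mathbb{Z}$.

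Next I would argue that $\xi|_{\tilde U}$ has integer periods and, being represented by $\omega|_U$ which vanishes pointwise on $N$ by (2$'$), in fact $\xi|_Y = 0$ in $\Check{H}^1(Y,\mathbb{R})$: a 1-form vanishing identically along $N$ integrates to zero over any $G$-loop contained in $N$, so $\textnormal{Per}_{\xi|_Y}\equiv 0$; combined with integrality this is automatic, but the real content is that vanishing of the real period homomorphism plus local contractibility lets one write $\omega|_U = df$ for a basic $f\colon U\to\mathbb{R}$ via the groupoid Poincaré Lemma applied orbit-by-orbit and patched along the triangulation. Then on $N$ one has $df|_N = \omega|_N = 0$ as a section of $T^*M|_N$, which is precisely condition (2). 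Conversely (2) $\Rightarrow$ (2$'$) is immediate, since $\omega|_U = df$ with $df|_N = 0$ gives $\omega(x)=0$ for $x\in N$.

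The main obstacle I anticipate is the first step: producing a \emph{$G$-invariant} neighbourhood of $N$ that genuinely admits a groupoid deformation retraction onto $H\rr N$, so that the cohomological comparison $\Check{H}^1(\tilde U,\mathbb{R})\cong \Check{H}^1(Y,\mathbb{R})$ is legitimate. For a general closed flow-invariant $Y$ this retraction need not exist (that is exactly the content of the \emph{other} lemma, Lemma \ref{Lema2}, which handles the non-integral case under a retraction hypothesis); so here the role of integrality must be to sidestep the retraction entirely. Concretely, I expect the clean route is: choose a closed basic 1-form $\omega$ representing $\xi$ with integer periods, use the groupoid Poincaré Lemma to get local primitives $f_\alpha$ on tubes $U_\alpha$ covering $N$, observe that $f_\alpha - f_\beta$ is \emph{locally constant} on overlaps and, by integrality together with (2$'$) forcing these constants to be determined by periods of $G$-loops in $N$ — all of which vanish since $\omega|_N\equiv 0$ — conclude the $f_\alpha$ glue to a global basic $f$ on $U=\bigcup U_\alpha$ with $df|_N = \omega|_N = 0$. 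So integrality enters only to guarantee the gluing cocycle is a coboundary, which is where the argument really hinges.
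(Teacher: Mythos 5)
Your reduction of the problem to showing that the gluing cocycle $\{f_\alpha - f_\beta\}$ is a coboundary is correct, but the step where you dispose of that cocycle is exactly where the argument breaks. The cocycle is a coboundary if and only if $\xi|_{U}=0$ in $H^1(U,\mathbb{R})$ for $U=\bigcup_\alpha U_\alpha$, i.e.\ if and only if \emph{every} $G|_U$-loop has vanishing period. You justify this by saying the constants are ``determined by periods of $G$-loops in $N$'', which vanish because $\omega|_N\equiv 0$. That is only true if loops in $U$ can be pushed into $N$ --- precisely the neighbourhood-retraction hypothesis that you correctly identify as unavailable here (it is the hypothesis of Lemma~\ref{Lema2}, not of this lemma). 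Without it, a loop in an arbitrarily small neighbourhood of $N$ can have nonzero period even though $\omega$ vanishes pointwise on $N$; and for sets $N$ of nontrivial shape (e.g.\ solenoidal chain-recurrent sets) the class $\xi|_N\in\check{H}^1(N,\mathbb{R})$ is not detected by singular loops in $N$ at all, so ``all loops in $N$ have zero period'' does not yield a neighbourhood on which $\xi$ vanishes. Integrality makes the periods integers, but an integer-valued \v{C}ech $1$-cocycle is not thereby a coboundary, so as written integrality never actually does any work in your argument. Your first variant (a $G$-invariant neighbourhood deformation retracting onto $N$) fails for the same reason: such a retraction need not exist for a general closed invariant $N$, as you yourself note.

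The paper closes the gap by a different mechanism. By \cite[Prop.~3.4]{V}, integrality of $\xi$ lets one write $\omega=a\,h^{\ast}(d\theta)$ for a basic smooth map $h\colon M\to S^1$ and a constant $a$. Picking a regular value $\beta$ of $h$, condition (2$'$) forces $N\cap h^{-1}(\beta)=\emptyset$ (on the regular fibre $dh\neq 0$, hence $\omega\neq 0$ there), so $U=M-h^{-1}(\beta)$ is an open neighbourhood of $N$. On this particular $U$ the map $h$ takes values in the contractible set $S^1-\{\beta\}$, hence every $G|_U$-loop has zero winding number and $f(x)=\int_{\sigma_x}\omega$ is a well-defined basic primitive with $\omega|_U=df$. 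In other words, the correct neighbourhood is not built from tubes around $N$ but is the complement of a regular fibre of the circle-valued map representing $\xi$; that is the idea your proposal is missing, and it is the only place integrality genuinely enters.
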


\begin{proof}
Let us suppose that $\xi\in H^1_{\textnormal{bas}}(G,\mathbb{Z})$. By \cite[Prop. 3.4]{V} it follows that there exists a basic smooth function $h:M\to S^1$ such that $\omega=ah^{\ast}(d\theta)$, where $a\in \mathbb{R}$ is a constant and $d\theta$ is the angle form on $S^1$. Let $\beta$ be a regular value of $h$, so that $(s^\ast h)^{-1}(\beta)\rr h^{-1}(\beta)$ is another proper étale Lie subgroupoid of $G\rr M$. Condition (2') implies that $U=M-h^{-1}(\beta)$ is an open neighborhood of $N$. Additionally, once again by \cite[Prop. 3.4]{V}, for a fixed base point $x_0\in M$ we know that $h$ can be written over $U$ as $h(x)=\exp(2\pi\sqrt{-1}\int_{\sigma_x}\omega)$ where $\sigma_x$ is any $G|_U$-path in $U$ joining $x_0$ with $x$. It is simple to check that the smooth function $f:U\to \mathbb{R}$ defined by sending $x\mapsto \int_{\sigma_x}\omega$ is basic and it verifies that $\omega|_U=df$. Hence, conditions (2) follows as desired.
\end{proof}

We say that $N$ admits a \emph{groupoid neighborhood retraction} if there exist an open groupoid neighborhood $(G|_U\rr U)\subset (G\rr M)$ of $H\rr N$ and a groupoid retraction\footnote{That is, $r$ is both a groupoid morphism and a retraction.} $r:G|_U\to H$ such that the inclusion $\iota_U:G|_U \to G$ is groupoid homotopic\footnote{The homotopy preserves the groupoid structure in the sense that it is given by a 1-parametric family of groupoid morphisms.} to $\iota_N\circ r$, where $\iota_N:H\to G$ is the inclusion. Note that the pair $(U,r)$ induces a corresponding orbifold neighborhood retraction $(\pi(U),\overline{r})$ for $Y$ in $X$.

\begin{lemma}\label{Lema2}
If $N$ admits a groupoid neighborhood retraction then conditions (2) and (2') are equivalent.
\end{lemma}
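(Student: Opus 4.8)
The plan is to show that condition (2') together with the existence of a groupoid neighborhood retraction forces condition (2), by using the retraction to produce the required primitive near $N$. The key observation is that a groupoid homotopy between $\iota_U$ and $\iota_N\circ r$ furnishes, via the usual integration-along-the-homotopy argument adapted to the basic setting, a chain homotopy showing that $\omega|_U$ and $r^\ast(\omega|_N)$ are cohomologous as closed basic 1-forms on $G|_U$. Concretely: let $\omega$ be a closed basic 1-form on $M$ presenting $\overline{\omega}$, and assume (2'), so $\omega$ vanishes on $N$ as a section $M\to T^\ast M$. Possibly after shrinking $U$, I may assume $U$ is a groupoid neighborhood of $H\rr N$ admitting the retraction $r:G|_U\to H$ with $\iota_U\simeq \iota_N\circ r$ through a 1-parametric family of groupoid morphisms $r_\tau:G|_U\to G|_U$, $\tau\in[0,1]$, with $r_0=\iota_U$ and $r_1=\iota_N\circ r$.

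First I would transport everything to the manifold level: the family $r_\tau$ is covered by a smooth family of maps $(r_\tau)_0:U\to U$ on objects, which are $s,t$-compatible in the appropriate sense, so pullback preserves basic forms. Then the standard homotopy operator $K:\Omega^\bullet_{\textnormal{bas}}(G|_U)\to \Omega^{\bullet-1}_{\textnormal{bas}}(G|_U)$, defined by integrating the contraction of the pulled-back form against the homotopy vector field over $\tau\in[0,1]$, satisfies $(r_1)^\ast\omega - (r_0)^\ast\omega = dK\omega + K\,d\omega = dK\omega$ since $\omega$ is closed; moreover $K$ sends basic forms to basic forms because the whole homotopy is by groupoid morphisms. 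Thus, writing $g = K\omega \in \Omega^0_{\textnormal{bas}}(G|_U) = C^\infty(U)^{G}$, we get $\omega|_U = (r_0)^\ast\omega = (r_1)^\ast\omega - dg = r^\ast(\iota_N^\ast\omega) - dg$. Now $\iota_N^\ast\omega$ is the pullback to $N$ of a 1-form that vanishes identically on $N$ by (2'), hence $\iota_N^\ast\omega = 0$, so $\omega|_U = -dg$. Setting $f = -g$ gives a basic smooth function on $U$ with $\omega|_U = df$; and $df|_N = -dg|_N = -\iota_N^\ast(dK\omega)$, which I must check vanishes.

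The main obstacle is exactly this last point: verifying $df|_N = 0$, i.e. that the homotopy-operator primitive is flat along $N$. Here I would use that $r$ is a retraction onto $H\rr N$, so $r|_N = \mathrm{id}_N$ and more importantly the homotopy $r_\tau$ can be taken to fix $N$ pointwise for all $\tau$ (this is part of what it means for the deformation retraction to retract onto $N$ — if the paper's definition only gives $\iota_U\simeq \iota_N\circ r$ without rel $N$, one notes that along $N$ the endpoints already agree and one can arrange the homotopy to be stationary there, or argue directly that the restriction of $K\omega$ to $N$ depends only on $\omega|_N=0$). If the homotopy is stationary on $N$, then along $N$ the homotopy vector field vanishes, so $(K\omega)|_N$ is constant on each component of $N$, whence $d(K\omega)|_N = 0$ and therefore $df|_N=0$, giving condition (2). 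Finally I would record that the pair $(\pi(U),\overline r)$ with $\overline f$ the function on $\pi(U)$ induced by the basic function $f$ provides the orbifold data required in Definition \ref{MainDefinition}(2), completing the proof; the converse implication (2)$\Rightarrow$(2') is immediate since $df|_N=0$ in particular forces $\omega=df$ to vanish as a section over $N$.
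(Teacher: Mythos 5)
Your argument is correct in substance, but it reaches the primitive by a genuinely different route than the paper. You run the groupoid homotopy $D$ between $\iota_U$ and $\iota_N\circ r$ through the standard homotopy operator $K$ to get $r^\ast(\iota_N^\ast\omega)-\omega|_U=dK\omega$, kill the first term using (2'), and take $f=-K\omega$ as an explicit global primitive on $U$; the paper instead defines $f$ component-by-component as the $G$-path integral $f_j(x)=\int_{\sigma_x}\omega$ from a base point $x_j\in U_j$, and uses the same homotopy $D$ only to deform an arbitrary $G|_U$-loop $\sigma$ onto the loop $r\circ\sigma$ in $N$, where \cite[Lem.~3.1]{V} plus (2') forces $\int_\sigma\omega=0$, establishing path-independence. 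Your approach buys an explicit formula for $f$ and avoids choosing base points, at the cost of needing the homotopy to be smooth enough in $\tau$ to integrate and of verifying that $K$ preserves basicness (which does hold: for an arrow $g:x\to y$ the two curves $\tau\mapsto D_0(\tau,x)=s(D(\tau,g))$ and $\tau\mapsto D_0(\tau,y)=t(D(\tau,g))$ give equal integrals because $s^\ast\omega=t^\ast\omega$ --- you assert this but should record the reason). The paper's route only needs homotopy invariance of $G$-path integrals, already available from \cite{V}. One inefficiency in your write-up: the final verification that $df|_N=0$ needs no discussion of whether the homotopy is stationary on $N$ --- once you know $\omega|_U=df$ as sections of $T^\ast U$, condition (2') gives $df|_N=\omega|_N=0$ immediately, exactly as in the paper; your detour also quietly conflates the restriction of the section $df$ to points of $N$ (what Definition \ref{MainDefinition}(2) asks for) with the pullback $\iota_N^\ast(df)$, which is a weaker statement.
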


\begin{proof}
Let us fix a base point $x_j$ in each $G$-path component $U_j$ of $U$ above. One can define basic smooth functions $f_j:U_j\to \mathbb{R}$ as $f_j(x)=\int_{\sigma_x}\omega$ where $\sigma_x$ is any $G|_{U_j}$-path in $U_j$ joining $x_j$ with $x$. We need to verify that each $f_j$ is well-defined by showing that the latter $G$-path integral is independent of the choice of the integration $G$-path $\sigma_x$. Note that this amounts to asking that the $G$-path integral $\int_\sigma \omega$ vanishes for any $G|_U$-loop $\sigma=\sigma_ng_n\sigma_{n-1}\cdots \sigma_1g_1\sigma_0$ lying in $U$. Let us consider the $G$-loop $\tilde{\sigma}=r\circ \sigma$ in $N$ which is given by
$$\tilde{\sigma}=(r_0\circ \sigma_n)r_1(g_n)(r_0\circ \sigma_{n-1})\cdots (r_0\circ \sigma_1)r_1(g_0)(r_0\circ \sigma_0).$$

Let $D:[0,1]\times G|_U\to G$ denote the groupoid homotopy between $\iota_U$ and $\iota_N\circ r$. For each $k=0,1,\cdots, n$ we have that the path $\sigma_k$ is homotopic to $r_0\circ \sigma_k$ with homotopy $D_k:[0,1]\times [0,1]\to M$ defined by $D_k(\tau,\tilde{\tau})=D(\tau,\sigma_k(\tilde{\tau}))$. We also set $d_k:[0,1]\to G$ as $d_k(\tau)=D(\tau,g_k)$ for each arrow $g_k\in G|_U$ arising in the $G|_U$-loop $\sigma$. Hence, $D_n(\tau,\cdot)d_n(\tau)\cdots d_1(\tau)D_0(\tau,\cdot)$ defines a $G$-homotopy between $\sigma$ and $\tilde{\sigma}$, as $r:G|_U\to H$ is a groupoid retraction.

By \cite[Lem. 3.1]{V} together with condition (2') it follows that $\int_{\sigma}\omega=\int_{\tilde{\sigma}}\omega=0$. Finally, the basic functions $f_j$ determine together a basic smooth function $f:U\to \mathbb{R}$ with $\omega|_U=df$.
\end{proof}

It is shown in \cite[Lem. 3.6]{V} that the space of zeros of $\omega$ is saturated, so that we have a topological subgroupoid $\textnormal{zeros}(s^\ast \omega)\rr \textnormal{zeros}(\omega)$ of $G\rr M$. Thus, as an immediate consequence of the previous results we get that:
\begin{corollary}
Suppose that $N=\textnormal{zeros}(\omega)$. If either the basic cohomology class $\xi$ of $\omega$ is integral or else $N$ is admits a groupoid neighborhood retraction then conditions (2) is redundant.
\end{corollary}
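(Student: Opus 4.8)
The plan is to observe that the Corollary is an immediate specialization of Lemmas \ref{lema1} and \ref{Lema2} once we unwind what ``condition (2) is redundant'' means in the case $N=\textnormal{zeros}(\omega)$. Recall that condition (2) always implies (2'), with no hypotheses, and that (2') for $\omega$ reads: the section $\omega:M\to T^\ast M$ vanishes on $N$. When $N=\textnormal{zeros}(\omega)$, condition (2') holds \emph{automatically} and carries no information, since by the very definition of $\textnormal{zeros}(\omega)$ the form $\omega$ vanishes identically on $N$. (Here one uses the cited fact from \cite[Lem. 3.6]{V} that $\textnormal{zeros}(\omega)$ is saturated, so that it does arise as the orbit space of the topological subgroupoid $\textnormal{zeros}(s^\ast\omega)\rr\textnormal{zeros}(\omega)$, and our standing hypothesis identifies this subgroupoid with $H\rr N$.)

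The key step is then to apply either Lemma \ref{lema1} or Lemma \ref{Lema2}, according to which hypothesis is assumed. If the basic cohomology class $\xi$ of $\omega$ is integral, Lemma \ref{lema1} gives that (2) and (2') are equivalent; since (2') is automatic when $N=\textnormal{zeros}(\omega)$, we conclude that (2) always holds, hence imposes no extra restriction on $\overline{\omega}$ beyond condition (1). If instead $N$ admits a groupoid neighborhood retraction, Lemma \ref{Lema2} yields the same equivalence of (2) and (2'), and the same conclusion follows. In either case the existence of a Lyapunov 1-form for $(\overline{\Phi},Y)$ in the class $\xi$ reduces to verifying condition (1) alone, i.e.\ that $\iota_{\overline{v}}\overline{\omega}=\iota_{v_0}\omega$ is negative on $M-N$.

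I do not expect a genuine obstacle here: the statement is packaging rather than substance, and all the analytic and homotopical work has already been carried out in the two preceding lemmas (which in turn rest on \cite[Prop. 3.4]{V} and \cite[Lem. 3.1]{V}). The only point requiring a little care is the logical bookkeeping — making explicit that ``(2) is redundant'' precisely means ``(2') $\Rightarrow$ (2)'', and that for $N=\textnormal{zeros}(\omega)$ the hypothesis (2') is vacuously satisfied — together with the observation that the subgroupoid structure on $\textnormal{zeros}(\omega)$ supplied by \cite[Lem. 3.6]{V} is what makes Lemmas \ref{lema1} and \ref{Lema2} literally applicable with this choice of $H\rr N$.
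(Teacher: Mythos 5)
Your proposal is correct and matches the paper's intended argument exactly: the paper states the corollary as an immediate consequence of Lemmas \ref{lema1} and \ref{Lema2}, precisely because condition (2') is automatic when $N=\textnormal{zeros}(\omega)$ (using the saturation fact from \cite[Lem. 3.6]{V} to make the subgroupoid $H\rr N$ well-defined), so either lemma upgrades it to condition (2). Your additional logical bookkeeping about what ``redundant'' means is a faithful unpacking of the same reasoning, not a different route.
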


We want to find topological conditions which guarantee that for a given pair $(\overline{v},Y)$ on $X$ there exists a Lyapunov 1-form $\overline{\omega}$ lying in a prescribed cohomology class $\xi\in H^1(X)$.

\subsection{Asymptotic cycles}

It is well-known that the singular real homology groups $H_\bullet(X,\mathbb{R})$ of $X$ can be described in terms of the simplicial structure of the nerve of $G\rr M$, more precisely, as their associated total real homology groups \cite{Be}. We can easily adapt Schwartzman's construction for asymptotic cycles \cite{Sch} to our setting as follows. Assume that $X$ is compact. Let $\overline{\mu}$ be a measure on $X$ which is invariant under $\overline{\Phi}$. It is clear that such a measure corresponds to a transverse measure $\mu$ on $M$ which is invariant under $\Phi$. Out of this data one can define a real homology class $\mathcal{A}_\mu(\Phi)\in H_1(X,\mathbb{R})$. Indeed, for a basic cohomology class $\xi\in H^1_{\textnormal{bas}}(G,\mathbb{R})=H^1(X)$ we define the \emph{evaluation map} given by the integral
\begin{equation}\label{AsymCycle}
\langle \xi, \mathcal{A}_\mu \rangle=\int_{X}\iota_{\overline{v}}\overline{\omega}([x]) d\overline{\mu}([x])=\int_{M}c(x)\iota_{v_0}\omega(x) d\mu(x),
\end{equation}
where $\omega$ is the closed basic 1-form on $M$ presenting $\overline{\omega}$, and the last equality is well-posed, provided $c:M\to \mathbb{R}$ is cut-off function, see \cite[Prop. 5.4]{CM}. In order to see that the expression above does not depend on the choice of $\omega$ we replace $\omega$ by $\omega'=\omega+df$ where $f:M\to\mathbb{R}$ is a smooth basic function, so that the integral \eqref{AsymCycle} gets changed by the equality 
$$\int_{M} c(x)(v_0)_x(f)d\mu(x)=\lim_{\tau \mapsto 0}\frac{1}{\tau}\int_{M}c(x)(f(\Phi_\tau(x))-f(x))d\mu(x).$$

Since $\mu$ is flow invariant we obtain that the right-hand side of the latter expression vanishes of any $f\in C^\infty(M)^G$, as $v_0$ covers a basic vector field. This automatically implies that $\langle \xi, \mathcal{A}_\mu \rangle$ is well-defined. Additionally, it is simple to see that the right-hand side of \eqref{AsymCycle} is a linear function on $\xi\in H^1_{\textnormal{bas}}(G,\mathbb{R})\cong H^1(X,\mathbb{R})$. Therefore, there exists a unique real homology class  $\mathcal{A}_\mu(\Phi)\in H_1(X,\mathbb{R})$ which satisfies    \eqref{AsymCycle} for all $\xi\in H^1_{\textnormal{bas}}(G,\mathbb{R})$. This is called the \emph{asymptotic cycle} of the flow $\overline{\Phi}$ corresponding to the measure $\overline{\mu}$.

Let us fix $\overline{\Phi}$ and vary the invariant measure $\overline{\mu}$. The fact that $\mathcal{A}_\mu$ depends linearly on $\overline{\mu}$ ensures that the set of asymptotic cycles $\mathcal{A}_\mu$ corresponding to all $\overline{\Phi}$-invariant positive measures on $X$ determines a cone in the vector space $H_1(X,\mathbb{R})$. Similarly to the manifold case \cite[Prop. 1]{FKLZ1}, a necessary condition for the existence of Lyapunov 1-forms on $X$ is given as follows.

\begin{proposition}\label{Proposition1}
Assume that there exits a Lyapunov 1-form on $X$ for $(\overline{v},Y)$, lying in a cohomology class $\xi\in H^1(X)$. Then $\langle \xi, \mathcal{A}_\mu \rangle\leq 0$ for any $\overline{\Phi}$-invariant positive measure $\overline{\mu}$ on $X$, with the equality holding if and only if the complement of $Y$ has measure zero. Moreover, the restriction of $\xi$ to $Y$, viewed as a $\Check{C}$ech cohomology class $\xi|_{Y}\in \Check{H}^1(Y,\mathbb{R})$, vanishes.
\end{proposition}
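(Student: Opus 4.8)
The plan is to split the statement into two independent assertions and handle them in order. \textbf{Step 1: the inequality $\langle \xi, \mathcal{A}_\mu\rangle \le 0$.} Since $\overline{\omega}$ is a Lyapunov 1-form in the class $\xi$, condition (1) of Definition \ref{MainDefinition} gives $\iota_{\overline v}\overline\omega \le 0$ everywhere on $X$ (it is strictly negative on $X - Y$ and, by condition (2) together with $d\overline f|_Y = 0$, it vanishes on $Y$). Plugging this into the defining formula \eqref{AsymCycle}, $\langle \xi,\mathcal{A}_\mu\rangle = \int_X \iota_{\overline v}\overline\omega([x])\, d\overline\mu([x])$, and using positivity of $\overline\mu$, immediately yields $\langle \xi,\mathcal{A}_\mu\rangle \le 0$. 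For the equality case: if $\overline\mu(X - Y) = 0$ then the integrand is supported (up to a null set) on $Y$, where it vanishes, so the integral is $0$; conversely, if $\overline\mu(X-Y) > 0$, then since $\iota_{\overline v}\overline\omega$ is a continuous strictly negative function on the open set $X - Y$, on any compact subset of positive measure it is bounded above by some $-\varepsilon < 0$, forcing $\langle\xi,\mathcal{A}_\mu\rangle < 0$. (Concretely one can pass to the groupoid presentation: $\iota_{v_0}\omega$ is continuous and negative on $M - N$, and $\overline\mu(X - Y) > 0$ translates to $\mu$ giving positive mass, weighted by the cut-off $c$, to $M - N$.)

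\textbf{Step 2: vanishing of $\xi|_Y$ in $\check{H}^1(Y,\mathbb{R})$.} Here condition (2) does the work directly. By Definition \ref{MainDefinition}(2) there is an open neighborhood $\tilde U$ of $Y$ and $\overline f : \tilde U \to \mathbb{R}$ with $\overline\omega|_{\tilde U} = d\overline f$; in particular $\overline\omega|_{\tilde U}$ is exact, so its de Rham class in $H^1(\tilde U)$ is zero. Since $Y$ is a closed subset of the compact metrizable space $X$ and $\tilde U$ is an open neighborhood, Čech cohomology $\check H^1(Y,\mathbb{R})$ is the direct limit of $H^1(\tilde U',\mathbb{R})$ over neighborhoods $\tilde U' \supset Y$ (here one uses that $X$, and hence $Y$, is triangulable/an ANR — guaranteed by the orbifold structure via \cite{PPT} — so that Čech and singular cohomology agree and the restriction $H^1(X,\mathbb{R}) \to \check H^1(Y,\mathbb{R})$ factors through $H^1(\tilde U,\mathbb{R})$). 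Since $\xi$ pulls back to the class of $\overline\omega|_{\tilde U} = d\overline f$, which is $0$ in $H^1(\tilde U,\mathbb{R})$, it restricts to $0$ in $\check H^1(Y,\mathbb{R})$.

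\textbf{Main obstacle.} Step 1 is essentially a sign computation once the formula \eqref{AsymCycle} is in hand; the only mild care needed is the strictness in the equality case, which I would phrase via a compact-exhaustion-plus-continuity argument on $X - Y$ (or on $M - N$ after applying the cut-off). The real subtlety is Step 2: I must be careful that the ``restriction to $Y$ as a Čech class'' is genuinely computed as a limit over neighborhoods and that the comparison isomorphism between $\check H^\bullet$ and de Rham/singular cohomology is legitimate for the (possibly quite singular) closed set $Y$ inside the orbifold $X$. This is where I would invoke that $X$ is triangulable with finite-dimensional cohomology (\cite[Sec. 8]{PPT}) and hence an ANR, so $Y$ is tautly embedded and the continuity property of Čech cohomology applies; the exactness of $\overline\omega$ on the neighborhood $\tilde U$ then transfers cleanly to the statement about $\xi|_Y$. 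If one wants to avoid ANR technology entirely, an alternative is to argue at the level of the homomorphism of periods: condition (2) forces $\operatorname{Per}_\xi$ to annihilate every $G$-loop that can be deformed into the subgroupoid $H \rr N$ presenting $Y$, which is the groupoid-theoretic incarnation of $\xi|_Y = 0$; I would mention this as the parallel route but carry out the Čech version as the primary argument.
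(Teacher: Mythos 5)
Your proposal is correct and follows essentially the same route as the paper: the sign of $\langle\xi,\mathcal{A}_\mu\rangle$ is read off from the defining integral \eqref{AsymCycle} using that $\iota_{\overline v}\overline\omega$ is negative off $Y$ and zero on $Y$, the strict inequality in the equality case is obtained via inner regularity (the paper cites Riesz's theorem to produce a compact $K\subset X-Y$ of positive measure on which $\iota_{\overline v}\overline\omega\le-\epsilon$), and the vanishing of $\xi|_Y$ in $\check H^1(Y,\mathbb{R})$ comes from condition (2) plus the description of Čech cohomology as the direct limit of $H^1$ over open neighborhoods of $Y$. No substantive differences.
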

\begin{proof}
By Definition \ref{MainDefinition} we get that the basic smooth function $\iota_{v_0}\omega$ is negative on $M-N$ and vanishes on $N$ (resp. the function $\iota_{\overline{v}}\overline{\omega}$ is negative on $X-Y$ and vanishes on $Y$). This implies that the integral $\int_{M}c(x)\iota_{v_0}\omega(x) d\mu(x)$ has to be non-positive, as $c:M\to \mathbb{R}$ is always positive. By the Riesz's theorem \cite[p. 256]{Lang} we get that if $\overline{\mu}(X-Y)>0$ then there exists a compact $K\subset X-Y$ with $\overline{\mu}	(K)>0$.  Hence, one can choose $\epsilon>0$ such that $\iota_{\overline{v}}\overline{\omega}|_{K}=\iota_{v_0}\omega|_{\pi^{-1}(K)}\leq -\epsilon$, so that
$$\langle \xi, \mathcal{A}_\mu \rangle=\int_{M}c(x)\iota_{v_0}\omega(x) d\mu(x)<-\epsilon \mu(\pi^{-1}(K))<0.$$

In other words, we have shown that the value of $\langle \xi, \mathcal{A}_\mu \rangle$ is strictly negative if the measure $\overline{\mu}$ is not supported in $Y$. 

As expected, the $\Check{C}$ech cohomology $\Check{H}^\bullet(Y,\mathbb{R})$ can be constructed as the total $\Check{C}$ech cohomology which is defined out of the simplicial structure of the nerve of $H\rr N$, consult \cite{Be} and \cite[s. 3.3]{BX}. It is well-known that $\Check{H}^1(Y,\mathbb{R})$ can be obtained as the direct limit of the singular cohomology $$\Check{H}^\bullet(Y,\mathbb{R})=\lim_{W\supset Y,\ W\ \textnormal{open}}H^1(W,\mathbb{R}),$$
and that such a result is compatible with the simplicial constructions defining both $\Check{H}^\bullet (Y,\mathbb{R})$ and $H^\bullet (W,\mathbb{R})$. Thus, the last assertion of our main statement follows directly, as $0=\xi|_{U}\in H^1(U)$ by condition (2) in Definition \ref{MainDefinition}.
\end{proof}

\subsection{Chain-recurrent sets}

A Riemannian metric on $X$ can be thought of as an equivalence class of a groupoid Riemannian metric on $G\rr M$ in the sense of del Hoyo and Fernandes \cite{dHF,dHF2}. The manifolds of arrows $G$ and objects $M$ respectively inherit Riemannian metrics $\eta^{(1)}$ and $\eta^{(0)}$ such that $s$ and $t$ become local isometries and the inversion $i$ gives rise to an isometry. Additionally, the induced bundle metrics along the normal directions to the orbits are such that the normal isotropy representations are by linear isometries. This provides us with a way of inducing ``inner products'' over the ``coarse'' tangent spaces $T_{[x]} X \approx \nu_x(\mathcal{O}_x)/G_x$ for all $x\in M$.

Following \cite[Thm. 6.1]{PPT}, we have that the geodesic distance $\overline{d}$ on $X$ can be obtained as
$$\overline{d}([x],[y])=\inf\lbrace d(x_1,\mathcal{O}_x)+\cdots+d(x_k,\mathcal{O}_{x_{k-1}})\rbrace,$$
where $d$ stands for the geodesic distance associated with $\eta^{(0)}$ on $M$, and the infimum is taken over all choices of pairs $(x_j,\mathcal{O}_{x_j})$ for $j=1,\cdots,k-1$ and $x_k\in \mathcal{O}_y$, over all $k\in \mathbb{N}$. Additionally, the distance $\overline{d}$ has the following properties:
\begin{itemize}
\item it is uniquely determined by the property that for each orbit $\mathcal{O}$ in $M$ and every point $z\in T_{\mathcal{O}}$ of an appropriate metric tubular neighborhood $T_{\mathcal{O}}$ of $\mathcal{O}$ the relation $\overline{d}(\mathcal{O},\mathcal{O}_z)=d(z,\mathcal{O})$ holds true, where $\mathcal{O}_z$ is the orbit through $z$,
\item the canonical projection $\pi:M\to X$ is a submetry, and
\item the topology induced on $X$ by $\overline{d}$ coincides with the quotient topology with respect to $\pi$.
\end{itemize}

Given $\delta>0$ and $T>1$, a $(\delta,T)$-\emph{chain} from $[x]$ to $[y]$ in $X$ is a finite sequence $[x]=[x_0], [x_1],\cdots,[x_l]=[y]$ of points in $X$ and numbers $\tau_1,\cdots,\tau_l\in \mathbb{R}$ such that $\tau_j\geq T$ and $\overline{d}(\overline{\Phi}_{\tau_j}([x_{j-1}]),[x_j])<\delta$ for all $1\leq j\leq l$. As $\pi:M\to X$ is a submetry and $\Phi$ covers $\overline{\Phi}$, this amounts to asking that the sequence $x=x_0, x_1,\cdots,x_l=y$ of points in $M$ satisfy $d(\Phi_{\tau_j}(x_{j-1}),x_j)<\delta$ for all $1\leq j\leq l$. The \emph{chain-recurrent set} of the flow $\overline{\Phi}$, denoted by $\overline{R}(\Phi)$, is defined to be the set of points $[x]\in X$ such that for any $\delta>0$ and $T>1$ there exists a $(\delta,T)$-chain starting and ending at $[x]$. It follows that the chain-recurrent set is closed and invariant under the flow $\overline{\Phi}$, so that $R(\Phi)=\pi^{-1}(\overline{R}(\Phi))$ is saturated in $M$ as well as closed and invariant under the flow $\Phi$. Note that $R(\Phi)$ can be thought of as the chain-recurrent set for the flow $\Phi$ over $(M,d)$.

A \emph{covering space} $E$ over $G\rr M$ is a covering space $p:E\to M$ equipped with a right $G$-action\footnote{A \emph{right action} of a Lie groupoid $G\rr M$ along a smooth map $p:E\to M$ is a smooth map $\cdot:E\times_MG\to E$ such that $p(e\cdot g)=s(g)$, $(e\cdot g)\cdot g'=e\cdot (gg')$ for all $(g,g')\in G^{(2)}$, and $e\cdot 1_x= e$ for all $x\in M$.} $E\times_{M}G\to E$ along $p$. Note that the map $p$ extends to a Lie groupoid morphism $p:E\rtimes G\to G$ from the action groupoid $E\rtimes G$ into $G$ defined by $p(e,g)=g$, which covers the covering map $E\to M$. Besides, the action groupoid $E\rtimes G\rr E$ is a proper \'etale Lie groupoid as well, meaning that it also represents an orbifold \cite{Moerd}. By \cite[Prop. 2.4]{V}, for any cohomology class $\xi\in H^1(X)$ one can consider a covering space $p_\xi:M_{\xi}\to M$ over $G$ which corresponds to the kernel of the $G$-homomorphism of periods $\textnormal{Per}_\xi$. Such a covering has the property that a $G$-loop $\sigma$ in $M$ lifts to a $(M_\xi\rtimes G)$-loop in $M_\xi$ if and only if the value of the cohomology class $\xi$ on the homology class $[\sigma]\in H_1(X,\mathbb{Z})$ vanishes, so that $\langle \xi,[\sigma]\rangle=\int_{[\sigma]}\xi=0$. We denote by $X_\xi$ the orbifold presented by $M_\xi\rtimes G\rr M_\xi$, thus obtaining an induced map $\overline{p}_\xi:X_\xi\to X$.

Let us now consider the Riemannian metric on $X_\xi$ which makes of $\overline{p}_\xi:X_\xi\to X$ an orbifold Riemannian covering map. Such a Riemannian metric induces a groupoid metric on the proper étale Lie groupoid $M_\xi\rtimes G\rr M_\xi$, so that $p_\xi:M_{\xi}\to M$ becomes a Riemannian covering map. The flow $\overline{\Phi}$ on $X$ lifts uniquely to a flow $\tilde{\Phi}$ on $X_\xi$. Thus, we can consider the chain-recurrent set  $\overline{R}(\tilde{\Phi})\subset X_\xi$ of the lifted flow and denote by $\overline{R}_\xi=\overline{p}_\xi(\overline{R}(\tilde{\Phi}))$ its projection onto $X$. As consequence of \cite[Prop. 3]{FKLZ2}, it follows that $\overline{R}_\xi$ is a closed and $\overline{\Phi}$-invariant subset of $\overline{R}(\Phi)$, which we refer to as the \emph{chain-recurrent set associated with the cohomology class} $\xi$. Its complement $\overline{R}(\Phi)-\overline{R}_\xi$ is denoted by $\overline{C}_\xi$. As before, we also denote by $C_\xi=\pi^{-1}(\overline{C}_\xi)$ and $R_\xi=\pi^{-1}(\overline{R}_\xi)$, the closed and saturated subsets in $M$ which enjoy of similar properties once we lift the flow $\Phi$ to $M_\xi$ using $p_\xi$.

In order to state our main result, we need to consider the homology class associated with any $(\delta,T)$-cycle, an ingredient which was initially exploited in \cite{FKLZ1,FKLZ2}. A  $(\delta,T)$-\emph{cycle} for the flow $\Phi$ is defined as a pair $(x,\tau)$, where $x\in M$ and $\tau > T$ such that $d(\Phi_\tau(x),x)< \delta$. In other words, a $(\delta,T)$-cycle is just a $(\delta,T)$-chain for $l=1$. Once again, using the flow $\overline{\Phi}$, this is equivalent to having a pair $([x],\tau)$, where $[x]\in X$ and $\tau > T$ such that $\overline{d}(\overline{\Phi}_\tau([x]),[x])< \delta$, as $\pi:M\to X$ is a submetry. By assuming that $\delta$ is small enough, one can construct a unique homology class $z\in H_1(X,\mathbb{Z})$ in a canonical way as follows. A pair of positive real numbers $(\epsilon,\delta)$, both of them depending on a non-zero cohomology class $\xi\in H^1(X)$, is said to be a \emph{scale} for $\xi$ if $\xi|_B=0$ for any $\overline{d}$-ball $B$ in $X$ of radius $2\epsilon$ and any two points $[x],[y]\in X$ with $\overline{d}([x],[y])<\delta$ can be connected through a $G$-path joining $x,y\in M$ whose image under $\pi$ is contained within a $\overline{d}$-ball in $X$ of radius $\epsilon$. 

\begin{lemma}
Such a scale always exists.
\end{lemma}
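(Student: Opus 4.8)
The plan is to produce the two constants in turn: an $\epsilon>0$ taking care of the vanishing requirement, and then, for that $\epsilon$, a $\delta>0$ taking care of the connectivity requirement. In both cases the idea is the same: first prove the desired property \emph{locally} near each point of $X$, and then make it uniform by using compactness of $(X,\overline{d})$ through a finite subcover together with a Lebesgue number.

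For the $\epsilon$-condition, fix a closed basic $1$-form $\omega$ on $M$ presenting $\xi$ and invoke the groupoid Poincaré Lemma \cite[Lem.\ 8.5]{PPT}: every orbit $\mathcal{O}\subset M$ admits an open saturated neighbourhood $U$ with $\omega|_U=df_U$ for some basic $f_U\in\Omega^0_{\textnormal{bas}}(G|_U)$, so that $\pi(U)$ is open in $X$ and $\xi$ restricts to the zero class over $\pi(U)$. Since these sets cover the compact space $X$, finitely many of them $W_1,\dots,W_m$ already do; let $\epsilon_0>0$ be a Lebesgue number for $\{W_i\}$ with respect to $\overline{d}$, and set $\epsilon\ce\epsilon_0/5$. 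Any $\overline{d}$-ball $B$ of radius $2\epsilon$ then has diameter at most $4\epsilon<\epsilon_0$, hence lies in some $W_i$, and restricting $\omega|_{\pi^{-1}(W_i)}=df_{W_i}$ exhibits $\omega$ as a basic-exact form over $\pi^{-1}(B)$, i.e.\ $\xi|_B=0$.

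Next, with $\epsilon$ fixed, I would construct $\delta$ from the metric tubular neighbourhood structure of \cite[Thm.\ 6.1]{PPT}. For each $[x]\in X$ choose the metric tubular neighbourhood $T_{\mathcal{O}_x}$ of the orbit $\mathcal{O}_x$ and a radius $r_{[x]}\in(0,\epsilon/2)$ small enough that every $[q]\in X$ with $\overline{d}([x],[q])<r_{[x]}$ has a representative $q\in T_{\mathcal{O}_x}$ with $d(q,\mathcal{O}_x)=\overline{d}([x],[q])$, realized by a minimizing geodesic of $M$ from $q$ to a foot point $\hat q\in\mathcal{O}_x$. Given $[p],[q]\in B([x],r_{[x]})$, pick such representatives $p,q$ and an arrow $g\in G$ with source $\hat p$ and target $\hat q$ (it exists because $\hat p,\hat q$ lie on the same orbit); the $G$-path from $p$ to $q$ consisting of the minimizing geodesic from $p$ to $\hat p$, the arrow $g$, and the minimizing geodesic from $\hat q$ to $q$ has the property that every point $w$ of either geodesic satisfies $\overline{d}([x],[w])\le d(w,\mathcal{O}_x)\le\overline{d}([x],[q])<r_{[x]}$ (and similarly with $p,\hat p$), so its $\pi$-image is contained in the $\overline{d}$-ball $B([x],\epsilon)$. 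Now $\{B([x],r_{[x]}/2)\}_{[x]\in X}$ is an open cover of $X$; I would take a finite subcover with centres $[x_1],\dots,[x_k]$ and put $\delta\ce\min_{1\le j\le k}r_{[x_j]}/2>0$. If $\overline{d}([p],[q])<\delta$, then $[p]\in B([x_j],r_{[x_j]}/2)$ for some $j$, whence $\overline{d}([x_j],[q])\le\overline{d}([x_j],[p])+\overline{d}([p],[q])<r_{[x_j]}$, so both $[p]$ and $[q]$ lie in $B([x_j],r_{[x_j]})$ and the previous paragraph supplies the required $G$-path. Hence $(\epsilon,\delta)$ is a scale for $\xi$.

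The step I expect to be the main obstacle is the local construction behind $\delta$: one must check that the constituent paths of the exhibited $G$-path---minimizing geodesics in $M$---keep their $\pi$-images inside a ball of prescribed radius, and that the transition between the two foot points is performed by an arrow of $G$ rather than by a path in $M$ (orbits of an étale groupoid are $0$-dimensional). Both facts are exactly what the ``appropriate metric tubular neighbourhood'' of \cite[Thm.\ 6.1]{PPT}, together with the formula for $\overline{d}$ recalled above, is meant to provide; the rest is a routine finite-cover and Lebesgue-number argument exploiting the compactness of $X$.
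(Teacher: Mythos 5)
Your argument is correct, and its first half (producing $\epsilon$ from the groupoid Poincar\'e Lemma of \cite[Lem. 8.5]{PPT} plus compactness and a Lebesgue number) is exactly the ingredient the paper invokes. Where you diverge is the $\delta$-half: the paper simply cites Lemma 2 of \cite{FKLZ2}, which for a compact, locally path-connected metric space yields short connecting \emph{continuous paths in $X$}, and leaves implicit the passage from such a path to an actual $G$-path in $M$. You instead build the $G$-path by hand from the metric tubular neighbourhoods of \cite[Thm. 6.1]{PPT}: two radial minimizing geodesics to foot points on the orbit $\mathcal{O}_x$, joined by an arrow of $G$ (which projects to a single point of $X$, so costs nothing in diameter), with the containment in $B([x],\epsilon)$ controlled by the identity $\overline{d}(\mathcal{O}_x,\mathcal{O}_w)=d(w,\mathcal{O}_x)$. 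This is more work than the citation but buys something real: it produces the $G$-path demanded by the definition of a scale directly, rather than a path in the coarse space that would still need lifting, and it makes the uniformization over $X$ (finite subcover of the balls $B([x],r_{[x]}/2)$, $\delta=\min_j r_{[x_j]}/2$) completely explicit. The only point worth adding is that the definition asks for a $G$-path between prescribed lifts $x,y\in M$, whereas your construction uses the representatives sitting in the tubular neighbourhood; this is harmless, since pre- and post-composing with arrows of $G$ moves between representatives without changing the $\pi$-image.
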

\begin{proof}
Indeed, a scale can be constructed by using Lemma 2 from \cite{FKLZ2} together the groupoid version of the Poincaré Lemma, which was proven in Lemma 8.5 from \cite{PPT}.
\end{proof}

Let us take a $(\delta,T)$-cycle with $\delta$ small enough so that the pair $(\delta,\epsilon)$ determines a scale for $\xi$ and choose a $G$-path $\sigma_x$, joining $\Phi_\tau(x)$ with $x$, such that its image under $\pi$ lies within a $\overline{d}$-ball in $X$ of radius $\epsilon$. This way, we obtain a singular cycle $z$ in $H_1(X,\mathbb{Z})$, which is a combination of the parts of the flow trajectory from $x$ to $\Phi_{\tau}(x)$ and the $G$-path $\sigma_x$, compare \cite[p. 23]{Be} or \cite[Prop. 2.2]{V}. Similarly to \cite[p. 1459]{FKLZ2}, it is simple to promote the latter construction to get a singular cycle in $H_1(X,\mathbb{Z})$ for a $(\delta,T)$-chain, provided that we continue assuming that $\delta$ small enough.

\section{Proof of Theorem \ref{Main Thm}}\label{S:4}

As alluded to previously, the main result of this paper provides topological conditions which guarantee that for a given vector field $\overline{v}$ on $X$ there exists a Lyapunov 1-form $\overline{\omega}$ lying in a prescribed cohomology class $\xi\in H^1(X)$. In the special case $\xi=0$, we get that $\overline{C}_\xi$ is empty and $\overline{R}=\overline{R}(\Phi)$ equals $\overline{R}_\xi$. Consequently, when $\xi=0$, our main result reduces to an orbifold version of a celebrated theorem of Conley \cite{Conley1,Conley2}, which reads as follows.

\begin{lemma}\label{OrbifoldConley}
Let $\overline{v}$ be a vector field on a compact orbifold $X$ and let $\overline{\Phi}:\mathbb{R}\times X\to X$ be the flow generated by $\overline{v}$, whose chain recurrent set is denoted by $\overline{R}$. Then, there exists a smooth Lyapunov function $\overline{L}:X\to \mathbb{R}$ for $(\overline{\Phi},\overline{R})$, meaning that $\overline{v}(\overline{L})<0$ on $X-\overline{R}$ and $d\overline{L}=0$ pointwise on $\overline{R}$.
\end{lemma}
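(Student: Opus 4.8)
The plan is to deduce Lemma~\ref{OrbifoldConley} as the $\xi=0$ specialization of Theorem~\ref{Main Thm}, after checking that the hypotheses of that theorem hold trivially in this case. First I would observe that when $\xi=0$ the homomorphism of periods $\textnormal{Per}_\xi$ vanishes identically, so its kernel is all of $\Pi_1^\textnormal{orb}(X)$; consequently the associated orbifold covering $\overline{p}_\xi\colon X_\xi\to X$ is the identity, the lifted flow $\tilde\Phi$ coincides with $\overline{\Phi}$, and hence $\overline{R}(\tilde\Phi)=\overline{R}(\Phi)=\overline{R}$ and $\overline{R}_\xi=\overline{R}$. It follows immediately that $\overline{C}_\xi=\overline{R}(\Phi)-\overline{R}_\xi=\varnothing$, so in particular $\overline{C}_\xi$ is closed, and the restriction $\xi|_{\overline{R}_\xi}=0|_{\overline{R}}$ is the zero \v{C}ech class, so the standing hypothesis of Theorem~\ref{Main Thm} is satisfied.

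Next I would verify condition (b) of Theorem~\ref{Main Thm} vacuously: since $\overline{C}_\xi=\varnothing$, there are no $(\delta,T)$-cycles $([x],\tau)$ with $[x]\in\overline{C}_\xi$, so the required inequality $\langle\xi,z\rangle\le -1$ holds trivially for any choice of $\delta>0$ and $T>1$ and any Riemannian metric on $X$. By the equivalence (a)$\Leftrightarrow$(b) in Theorem~\ref{Main Thm}, property (a) holds as well: there exists a Lyapunov 1-form $\overline{\omega}$ for the pair $(X,\overline{R}_\xi)=(X,\overline{R})$ lying in the cohomology class $\xi=0$. Being exact, $\overline{\omega}=d\overline{L}$ for some smooth basic function $\overline{L}\colon X\to\mathbb{R}$ (here one uses that $\overline{\omega}$ is closed and represents the zero class, together with the identification $H^1(X)\cong H^1_{\textnormal{bas}}(G,\mathbb{R})$).

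Finally I would translate the two defining properties of a Lyapunov 1-form into the statement of the lemma, exactly as in the Remark following Definition~\ref{MainDefinition}. Property (1) says $\iota_{\overline{v}}\,d\overline{L}=\overline{v}(\overline{L})$ is negative on $X-\overline{R}$; property (2) furnishes a neighborhood $\tilde U$ of $\overline{R}$ with $d\overline{L}|_{\tilde U}=d\overline{L}$ (automatic since $\overline{\omega}$ is already global and exact) and, crucially, $d\overline{L}|_{\overline{R}}=0$ pointwise. This is precisely the assertion that $\overline{L}$ is a smooth Lyapunov function for $(\overline{\Phi},\overline{R})$.

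I do not expect any serious obstacle here: the whole content of the lemma is packaged inside Theorem~\ref{Main Thm}, and the only things to be careful about are (i) genuinely confirming that $\xi=0$ forces $X_\xi=X$ and hence $\overline{R}_\xi=\overline{R}$ and $\overline{C}_\xi=\varnothing$ — this rests on the description of $\overline{p}_\xi$ as the covering associated to $\ker\textnormal{Per}_\xi$ from \cite[Prop.~2.4]{V} — and (ii) noting that an exact Lyapunov 1-form is the differential of a \emph{basic} function, so that it descends to a genuine function $\overline{L}$ on $X$. If one wished to avoid invoking the full strength of Theorem~\ref{Main Thm} (whose proof is the subject of this section and might be considered to come \emph{after} this lemma), one could instead give a direct argument: apply the classical Conley theorem on the compact, locally path-connected metric space $(X,\overline{d})$ to produce a continuous Lyapunov function, then smooth it relative to $\overline{R}$ using the groupoid Poincaré Lemma \cite[Lem.~8.5]{PPT} and a basic partition of unity; but presenting it as a corollary of the main theorem is cleaner, and the mild circularity is acceptable since the $\xi=0$ case can be read off from the general argument without extra work.
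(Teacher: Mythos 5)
There is a genuine gap: your primary route is circular. Lemma~\ref{OrbifoldConley} is not a corollary of Theorem~\ref{Main Thm} in this paper --- it is an ingredient of its proof. The implication you need, (b)$\Rightarrow$(a), is established in the paper via the chain (b)$\Rightarrow$(c)$\Rightarrow$(d)$\Rightarrow$(a), and the step (d)$\Rightarrow$(a) explicitly invokes Lemma~\ref{OrbifoldConley} to produce the Lyapunov function $\overline{L}$ out of which the final $1$-form $\omega_2=\omega_1-dh_1+a\cdot dL$ is built. If you specialize that argument to $\xi=0$, everything before the invocation of the lemma becomes vacuous ($\overline{C}_\xi=\varnothing$, $\omega_1=0$, $h_1=0$), and what remains is precisely the statement to be proved; so the $\xi=0$ case cannot be ``read off from the general argument without extra work'' --- the extra work \emph{is} the lemma. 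Your formal verifications (that $\xi=0$ forces $X_\xi=X$, $\overline{R}_\xi=\overline{R}$, $\overline{C}_\xi=\varnothing$, and that condition (b) holds vacuously, and the translation of an exact Lyapunov $1$-form into a Lyapunov function) are all correct, but they rest on an equivalence whose proof presupposes the result.

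Your fallback sketch is the right kind of argument but differs from the paper's and is underspecified at the crucial point. The paper does not apply the continuous Conley theorem on the metric space $(X,\overline{d})$ and then smooth: it applies the \emph{smooth} Conley theorem (Proposition 2 of \cite{FKLZ1}, or Corollary 2.3 of \cite{FathiPageault}) to the flow $\Phi$ of $v_0$ on the manifold of objects $M$, obtaining a smooth $L:M\to\mathbb{R}$ with $v_0(L)<0$ on $M-R$ and $dL=0$ on $R=\pi^{-1}(\overline{R})$, and then makes $L$ basic by averaging against a proper Haar measure system, $L^\mu(x)=\int_{s^{-1}(x)}(L\circ t)\,d\mu^x$; saturation and flow-invariance of $R$ ensure the averaged function retains both properties and descends to $\overline{L}$ on $X$. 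In your version, smoothing a merely continuous Lyapunov function on an orbifold while preserving strict decrease off $\overline{R}$ and the pointwise vanishing of the differential on $\overline{R}$ is a nontrivial step that the groupoid Poincar\'e Lemma (which concerns local exactness of closed basic forms) does not supply. The averaging-after-smooth-Conley route is what closes that gap.
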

\begin{proof}
Using the classical Conley's theorem, (see Proposition 2 in \cite{FKLZ1} or Corollary 2.3 in \cite{FathiPageault}), one may ensure that there exists a smooth Lyapunov function $L:M\to \mathbb{R}$ for $(\Phi,R)$, where $\Phi:M\times \mathbb{R}\to M$ is the flow generated by $v_0$ and whose chain recurrent set is $R=\pi^{-1}(\overline{R})$. In other words, we have that $v_0(L)<0$ on $M-R$ and $dL=0$ pointwise on $R$. If $\lbrace \mu^x\rbrace_{x\in M}$ is a proper Haar measure system on $G$ then we define
$$L^\mu(x)=\int_{s^{-1}(x)}(L\circ t)(g)d\mu^{x}(g),\qquad x\in M.$$

This is a smooth basic function on $M$, preserving the properties of $L$ mentioned above, so that its induced smooth function $\overline{L}^\mu:X\to \mathbb{R}$ has the properties we desire.
\end{proof}

We are now in conditions to prove our main result.

\begin{proof}[Proof of Theorem \ref{Main Thm}]
Recall that $(X,\overline{d})$ is a compact, locally path-connected metric space. Thus, the main idea of the proof is to use the machinery developed in the previous sections to adapt, to the orbifold setting, the key arguments from the proof of Theorem 1 in \cite{FKLZ1}, which in turn are based on the case of continuous flows on compact, locally path-connected metric spaces, studied in \cite{FKLZ2}.

(a) implies (b). Note that $\overline{C}_\xi$ is compact. Therefore, this implication follows exactly as in the proof of Proposition 4 in \cite{FKLZ2}.

(b) implies (c). We say that $[x]\in X$ is a \emph{quasi-regular point} of the flow $\overline{\Phi}:\mathbb{R}\times X\to X$ if for any continuous function $\overline{f}:X\to \mathbb{R}$, the following limits exist:
$$\lim_{\tau\mapsto \infty} \frac{1}{\tau}\int_0^\tau \overline{f}(\overline{\Phi}_\zeta([x])) d\zeta=\lim_{\tau\mapsto \infty}  \frac{1}{\tau}\int_0^\tau c(\Phi_\zeta(x))f(\Phi_\zeta(x)) d\zeta,$$
where $f:M\to \mathbb{R}$ stands for the continuous basic function presenting $\overline{f}$ and $c:M\to \mathbb{R}$ is a cut-off function. We have that:
\begin{itemize}
\item[$(\star)$] the subset of all quasi-regular points has full measure with respect to any $\overline{\Phi}$-invariant positive measure on $X$, see \cite[p. 106]{Jacobs},
\item[$(\star\star)$] for any quasi-regular point $[x]\in X$, there exists a unique $\overline{\Phi}$-invariant positive measure $\overline{\mu}_{x}$ on $X$ with $\overline{\mu}_{x}(X)=1$ satisfying
$$\lim_{\tau\mapsto \infty} \frac{1}{\tau}\int_0^\tau \overline{f}(\overline{\Phi}_\zeta([x])) d\zeta=\int_X \overline{f}([y])d\overline{\mu}_{x}([y]),$$
for any continuous function $\overline{f}:X\to \mathbb{R}$, compare Riesz representation theorem \cite[p. 256]{Rudin}, and
\item[$(\star\star\star)$] any $\overline{\Phi}$-invariant positive measure $\overline{\mu}$ on $X$ with $\overline{\mu}(X)=1$ sits inside the weak* closure of the convex hull of the set of measures $\lbrace \overline{\mu}_{x}: [x]\in X\ \textnormal{quasi-regular point}\rbrace$, see \cite[p. 108]{Jacobs}.
\end{itemize}

From Theorem 2 in \cite{FKLZ2}, we know that $\overline{C}_\xi$ is closed, and hence compact. Therefore, we can apply the facts mentioned above to restrict the flow $\overline{\Phi}$ to $\overline{C}_\xi$. In particular, we can restrict the flow  $\Phi$ to $C_\xi$. Let $\omega$ be a closed basic 1-form representing $\xi$. If $[x]\in \overline{C}_\xi$ is a quasi-regular point of the flow $\overline{\Phi}|_{\overline{C}_\xi}$ then
\begin{eqnarray*}
\lim_{\tau\mapsto \infty} \frac{1}{\tau}\int_{\overline{\gamma}_x}\overline{\omega} & = & \lim_{\tau\mapsto \infty} \frac{1}{\tau}\int_0^\tau\iota_{\overline{v}}\overline{\omega}(\overline{\Phi}_\zeta([x]) d\zeta =\lim_{\tau\mapsto \infty} \frac{1}{\tau}\int_0^\tau c(\Phi_\zeta(x))\iota_{v_0}\omega(\Phi_\zeta(x)) d\zeta\\
& = & \int_M c(x)\iota_{v_0}\omega(x) d\mu_{x}(y)=\int_X \iota_{\overline{v}}\overline{\omega}([x]) d\overline{\mu}_{x}([y])=\langle \xi, \mathcal{A}_{\mu_x}\rangle,
\end{eqnarray*}
where $\overline{\gamma}_x=\pi\circ \gamma_x$ with $\gamma_x$ denoting the flow trajectory from $x$ to $\Phi_\tau(x)$, $c:M\to \mathbb{R}$ is a cut-off function, and $\mu_{x}$ denotes the $G$-invariant measure on $M$ which is uniquely determined by $\overline{\mu}_x=\mu_x\circ \pi_!$. As a consequence, condition (c) is equivalent to the following statement: the subset $\overline{C}_\xi$ is closed and there exists a constant $\lambda>0$ such that for any quasi-regular point $[x]\in \overline{C}_\xi$ it holds that
\begin{equation}\label{Aux Condition 1}
\lim_{\tau\mapsto \infty} \frac{1}{\tau}\int_{\overline{\gamma}_x}\overline{\omega} \leq -\lambda,
\end{equation}
where $\omega$ is an arbitrary closed basic 1-form on $M$ lying in the cohomology class $\xi\in H^1(X)$. Arguing exactly as we did to define the asymptotic cycle of $\xi$ one can guarantee that the limit \eqref{Aux Condition 1} does not depend on the choice of the closed basic 1-form $\omega$ on $M$. 

Applying Lemma 6 in \cite{FKLZ2} we know that there exist constants $\alpha>0$ and $\beta>0$ such that for any $[x]\in \overline{C}_\xi$ and $\tau\geq 0$ it holds that
$$\int_{\overline{\gamma}_x}\overline{\omega} \leq-\alpha\tau+\beta.$$

This implies that for $\tau\geq 2\beta/\alpha$ one obtains $\frac{1}{\tau}\int_{\overline{\gamma}_x}\overline{\omega} \leq-\frac{\alpha}{2}$, meaning that $\langle \xi, \mathcal{A}_{\mu_x}\rangle \leq-\frac{\alpha}{2}<0$. Therefore, as consequence of ($\star$) and ($\star\star\star$), it follows that for any $\overline{\Phi}$-invariant positive measure $\overline{\mu}$ on $X$ with $\overline{\mu}(X)=\overline{\mu}(\overline{C}_\xi)=1$ we get:

$$
	\langle \xi, \mathcal{A}_{\mu}\rangle \leq-\frac{\alpha}{2}<0.
$$

The rest of the argument follows exactly as in \cite[p. 8]{FKLZ1}, after using \cite[Prop. 4.1.17]{KatokHasselblatt} together with the groupoid version of the Poincaré Lemma \cite[Lem. 8.5]{PPT}. 

Since (c) clearly implies (d), it remains only to show that (d) implies (a). As in the manifold case, our argument here relies on adapting some of Schwartzman’s techniques developed in \cite{Sch} to the orbifold setting. First of all, let us show that there exists a closed basic 1-form $\omega_1$ in the cohomology class $\xi$ such that $\iota_{\overline{v}}\overline{\omega}_1<0$ on $\overline{C}_\xi$. We denote by $\mathcal{D}\subset C(X)=C(M)^G$ the space of functions 
$$\mathcal{D}=\lbrace \overline{v}(\overline{f}): \overline{f}\in C^\infty(X)\rbrace\cong \lbrace v_0(f): f\in C^\infty(M)^G\rbrace,$$
and by $\mathcal{C}^-\subset C(X)=C(M)^G$ the cone of functions
$$\mathcal{C}^-=\lbrace \overline{f}: \overline{f}([x])<0\ \textnormal{for all}\ [x]\in \overline{C}_\xi \rbrace\cong \lbrace f: f(x)<0\ \textnormal{for all}\ x\in C_\xi \rbrace.$$

The fact that $\overline{C}_\xi$ is compact implies that the cone $\mathcal{C}^-$ is open in the Banach space $C(X)$ equipped with the standard uniform (supremum) norm. Let us now pick an arbitrary closed basic 1-form $\omega$ presenting the cohomology class $\xi$. We claim that $\mathcal{C}^-\cap (\iota_{\overline{v}}\overline{\omega}+\mathcal{D})\neq \emptyset$. By contradiction, we suppose that $\mathcal{C}^-\cap (\iota_{\overline{v}}\overline{\omega}+\mathcal{D})= \emptyset$, so that there exists a continuous linear functional $\Xi:C(X)\to \mathbb{R}$ verifying 
$$\Xi|_{(\iota_{\overline{v}}\overline{\omega}+\mathcal{D}) }\geq 0\qquad \textnormal{and}\qquad \Xi|_{\mathcal{C}^-}<0.$$

The existence of $\Xi$ is consequence of the Hahn--Banach theorem, see \cite[p. 58]{Rudin}. Observe that the restriction of $\Xi$ to $\mathcal{D}$ vanishes, as $\iota_{\overline{v}}\overline{\omega}+\mathcal{D}$ is an affine subspace and $\Xi$ is bounded over it from below. Additionally, the Riesz representation theorem ensures that there exists a measure $\overline{\mu}$ on $X$ for which $\Xi(\overline{f})=\int_X\overline{f}d\overline{\mu}([x])$ for all $f\in C(X)$. Such a measure is $\overline{\Phi}$-invariant since $\Xi|_{\mathcal{D}}=0$ (compare \cite{Sch}), and, moreover, from the very definition of $\mathcal{C}^-$ it holds that $\overline{\mu}|_{\overline{C}_\xi}>0$ since $\Xi|_{\mathcal{C}^-}<0$.

Let $\overline{\chi}:X\to \mathbb{R}$ denote the characteristic function of $\overline{C}_\xi$. It follows that the measure $\overline{\nu}=\overline{\chi}\cdot \overline{\mu}$ is $\overline{\Phi}$-invariant and positive, as $\overline{C}_\xi$ is $\overline{\Phi}$-invariant. Furthermore, the fact that any $\overline{\Phi}$-invariant measure is supported on $\overline{R}(\Phi)=\overline{R}_\xi\cup \overline{C}_\xi$ implies that the $\overline{\Phi}$-invariant measure $\overline{\mu}-\overline{\nu}$ is supported on $\overline{R}_\xi$. Therefore, using our assumption $\xi|_{\overline{R}_\xi}=0$ together with similar arguments as those used to obtain \cite[Eq. (7.5)]{FKLZ1} one gets that $\langle \xi, \mathcal{A}_{\mu-\nu}\rangle=0$. Consequently, 
$$\langle \xi, \mathcal{A}_{\nu}\rangle=\langle \xi, \mathcal{A}_{\mu}\rangle=\int_X\iota_{\overline{v}}\overline{\omega}([x])d\overline{\mu}([x])=\Xi(\iota_{\overline{v}}\overline{\omega})\geq 0,$$
which contradicts (d). In other words, $\mathcal{C}^-\cap (\iota_{\overline{v}}\overline{\omega}+\mathcal{D})\neq \emptyset$ and therefore there exists a smooth basic function $h:M\to \mathbb{R}$ such that the closed basic 1-form $\omega_1=\omega+dh$ also represents the cohomology class $\xi$ and verifies $\iota_{\overline{v}}\overline{\omega}_1<0$ on $\overline{C}_\xi$.

Since $\overline{C}_\xi\subset X$ is compact we can consider an open neighborhood $W_1$ of $\overline{C}_\xi$ such that $W_1\cap \overline{R}_\xi=\emptyset$ and $\iota_{\overline{v}}\overline{\omega}_1<0$ on $W_1$. Also, the fact that $\xi|_{\overline{R}_\xi}=0$ implies that there is another open neighborhood $W_2$ of $\overline{R}_\xi$ such that $W_1\cap W_2=\emptyset$, and, using $G$-invariant partitions of unity as in \cite[s. 3.1]{CM0}, there exists a smooth basic function $h_1:M\to \mathbb{R}$ verifying $\omega_1|_{\pi^{-1}(W_2)}=dh_1$ and $dh_1|_{\pi^{-1}(W_1)}=0$. We can use now Lemma \ref{OrbifoldConley} to obtain a Lyapunov function $\overline{L}:X\to \mathbb{R}$ for $(\overline{\Phi},\overline{R}(\Phi))$, which we assume to be presented by a smooth basic function $L:M\to \mathbb{R}$. Let us define the closed basic 1-form on $M$:
\begin{equation}\label{Aux Condition 2}
\omega_2=\omega_1-dh_1+a\cdot dL,
\end{equation}
where $a>0$ is a constant that we still need to choose appropriately. It is clear that $\omega_2$ represents $\xi$ as well. By construction, $\omega_2|_{\pi^{-1}(W_2)}=a\cdot dL$ for any $a>0$, so that it satisfies property (2) of Definition \ref{MainDefinition}. Moreover, we also have that $\iota_{\overline{v}}\overline{\omega}_2<0$ on both $W_1$ and $W_2-\overline{R}_\xi$, because $\omega_1-dh_1$ restricted to either $\pi^{-1}(W_1)$ or $\pi^{-1}(W_2-\overline{R}_\xi)$ vanishes, whereas $\overline{v}(\overline{L})<0$. Finally, note that the complement of $W_1\cup W_2$ is compact and disjoint from $\overline{R}(\Phi)$. Hence,
$$1<a_0=1+\sup_{[x]\notin W_1\cup W_2}\frac{\vert \iota_{\overline{v}}(\overline{\omega}_1-\overline{dh_1}) \vert }{\vert \overline{v}(\overline{L})\vert }<\infty, $$
thus obtaining that $\iota_{\overline{v}}\overline{\omega}_2<0$ on $X- \overline{R}_\xi$ for all $a\geq a_0$. The latter implies that for all choices $a\geq a_0$ it holds that $\overline{\omega}_2$ also satisfies property (1) of Definition \ref{MainDefinition} for $(\overline{\Phi},\overline{R}_\xi)$. This completes the proof.
\end{proof}

\section{Examples}\label{S:5}

Motivated by the constructions developed in \cite[s. 7]{FKLZ2}, in this short section we provide some examples which allow us to illustrate our main result. 

Let us first suppose that $K$ is a discrete group acting properly and effectively on a smooth manifold $M$, so that the action groupoid $K\ltimes M\rr M$ represents an orbifold $X\approx M/K$. Differential forms on $M/K$ are completely determined by differential forms on $M$ which are $K$-invariant and horizontal, the latter term meaning that they vanish on vectors tangent to the $K$-orbits. As shown in \cite{Wa}, a differential form is basic and horizontal in the previous sense if and only if it is a basic differential form on the corresponding action groupoid. Similarly, vector fields on $M/K$ are completely determined by $K$-invariant vector fields on $M$ and any flow $\overline{\Phi}:\mathbb{R}\times M/K\to M/K$ is covered by the corresponding $K$-equivariant flow $\Phi:\mathbb{R}\times M\to M$.

\begin{example}[Pillowcase]
	Consider the 2-torus $\mathbb{T}^2=\mathbb{R}^2/\mathbb{Z}^2$ embedded in $\mathbb{R}^3$ together with the action by the group $\mathbb{Z}_2$ given by rotations of angle $\pi$ around one of the axis of $\mathbb{T}^2$. The quotient space $\mathbb{T}^2/\mathbb{Z}_2$ is an orbifold whose underlying space is homeomorphic to the 2-sphere and whose singular locus consists of four singular points each with isotropy $\mathbb{Z}_2$.
\end{example}

\begin{example}
Let $Q$ be the pillowcase. Take a cohomology class $\xi\in H^1(Q)$ which can be written as $\xi=f_1[dx]+f_2[dy]$ where $f_1,f_2\in C^{\infty}(\mathbb{T}^2)$ are smooth odd functions, and $dx$, $dy$ stand for the standard coordinate 1-forms. We consider the flow $\Phi$ of the following $\mathbb{Z}_2$-invariant vector field
$$v_0=h(x,y)\cdot \left( a\frac{\partial}{\partial x}+b\frac{\partial}{\partial y} \right),$$ 
where $b\neq 0$, $a/b\in \mathbb{Q}$, and $h:\mathbb{T}^2\to [-1,1]$ is a smooth odd function, vanishing at a single point $(x_0,y_0)\in \mathbb{T}^2$. The chain recurrent set $\overline{R}(\Phi)$ is the whole pillowcase $Q$, while $\overline{R}_\xi$ equals $Q$ if $f_1a+f_2b=0$, or $h^{-1}(0)=\mathbb{Z}_2\cdot (x_0,y_0)$ otherwise. If $f_1a+f_2b\neq 0$ then the set $\overline{C}_\xi=Q-[(x_0,y_0)]$ is not closed. However, we have that a Lyapunov 1-form in $\xi$ exists if and only if $f_1a+f_2b<0$, thus obtaining that $\overline{\omega}=f_1dx+f_2dy$ is such a Lyapunov 1-form.

Let us now suppose that $v_0$ is such that $b\neq 0$, $a/b\notin \mathbb{Q}$, and $h:\mathbb{T}^2\to \mathbb{R}$ is a smooth odd function which has no zeros. If $\xi$ is such that $f_1a+f_2b= 0$ then $\overline{R}(\Phi)=Q$ and $\overline{R}_\xi=\emptyset$, but condition (b) in Theorem \ref{Main Thm} is not satisfied. Hence, there is no Lyapunov 1-form for $(\overline{\Phi},\overline{R}_\xi)$ in $\xi$.
\end{example}

The following general procedure describes a class of examples of flows $\overline{\Phi}:\mathbb{R}\times X\to X$ for which there exists a cohomology class $\xi\in H^1(X)$ satisfying the conditions of our main theorem.

\begin{example}
Let $X'$ be a compact orbifold endowed with a fixed vector field $\overline{v}'$, whose flow is denoted by $\overline{\Psi}$. We assume that the chain recurrent set $\overline{R}(\Psi)$ equals $\overline{R}_1\cup \overline{R}_2$ where $\overline{R}_1$ and $\overline{R}_2$ are closed and disjoint. Adapting Example 1 in \cite{FKLZ2} to the orbifold context, one can build a flow $\overline{\Phi}$ on the product orbifold $X=X'\times S^1$ such that $\overline{R}_\xi(\Phi)=\overline{R}_1\times S^0$ and $\overline{C}_\xi(\Phi)=\overline{R}_2\times S^1$. Here $\xi\in H^1(X)$ stands for the cohomology class induced by the projection onto the circle $X\to S^1$ and $S^0$ is the two-point set. 

Let $\theta\in [0,2\pi]$ denote the angle coordinate on $S^1$, and consider the vector fields on $S^1$ given by $w_1=\cos(\theta)\cdot \frac{\partial}{\partial \theta}$ and $w_2=\frac{\partial}{\partial \theta}$. Observe that $w_1$ has two zeros $\lbrace p_1,p_2\rbrace=S^0\subset S^1$ corresponding to the angles $\theta=\pi/2$ and $\theta=3\pi/2$. Using orbifold (i.e. $G$-invariant) partitions of unity as in \cite[s. 3.1]{CM0}, there exist smooth functions $\overline{f}_1,\overline{f}_2:X'\to [0,1]$ having disjoint supports and satisfying $\overline{f}_1|_{\overline{R}_1}=1$  and $\overline{f}_2|_{\overline{R}_2}=1$. Hence, the flow $\overline{\Phi}:\mathbb{R}\times X\to X$ of the vector field $\overline{v}$ on $X$ defined by
$$\overline{v}=\overline{v}'+ \overline{f}_1w_1+\overline{f}_2w_2,$$
is such that $\overline{R}_\xi(\Phi)=\overline{R}_1\times S^0$ and $\overline{C}_\xi(\Phi)=\overline{R}_2\times S^1$. This immediately implies that $\overline{C}_\xi(\Phi)$ is closed and that $\xi|_{\overline{R}_\xi(\Phi)}=0$. Moreover, it is simple to check that condition (b) in Theorem \ref{Main Thm} is also satisfied.
\end{example}

\end{document}